\theoremstyle{definition}
\newtheorem{definition}{Definition}[section]
\theoremstyle{plain}
\newtheorem{lemma}[definition]{Lemma}
\newtheorem{prp}[definition]{Proposition}
\newtheorem{corollary}[definition]{Corollary}
\newtheorem{theorem}[definition]{Theorem}
\newcommand\myeq{\mathrel{\overset{\makebox[0pt]{\mbox{\normalfont\tiny\sffamily def}}}{=}}}
\newcommand{\sh} {\llcorner\!\shortmid \!\lrcorner}
\newcommand{\sudda}[1]{}
\begin{document}

\title{On the speciality of Tortkara algebras}

\author{A.S. Dzhumadil'daev}

\address{Institute of Mathematics and Mathematical Modeling, Almaty, Kazakhstan}

\email{dzhuma@hotmail.com}

\author{N.A. Ismailov}

\address{Instituto de Matem\'atica e Estat\'istica, Universidade de S\~ao Paulo,
S\~ao Paulo, SP, Brazil}

\email{nurlan.ismail@gmail.com}

\author{F. A. Mashurov}

\address{Institute of Mathematics and Mathematical Modeling, Almaty, Kazakhstan}

\email{f.mashurov@gmail.com}

\subjclass[2000]{17A30, 17A50}
\keywords{Zinbiel algebra, Lie elements, Cohn's and Shirshov's theorems}

\thanks{The authors were supported by grant AP05131123 "Cohomological and structural problems of non-associative algebras" and the second author was supported by the FAPESP Proc.2017/21429-6}

 \maketitle

\begin{abstract} A criterion for elements of free Zinbiel algebras to be Lie or Jordan is established. This criterion is used in studying speciality problems of Tortkara algebras. We construct a base of free special Tortkara algebras. Furthermore, we prove analogues of classical Cohn's and Shirshov's theorems for Tortkara algebras.
\end{abstract}

\section{\label{nn}\ Introduction}

An algebra with identity 
\begin{equation}\label{Zinbiel identity}
a\circ(b\circ c)=(a\circ b+b\circ a)\circ c\end{equation}
is called {\it(right)-Zinbiel} algebra. These algebras were introduced by J-L.Loday in \cite{Loday} as a dual of Leibniz algebras. In some papers Zinbiel algebras are called {\it dual Leibniz}, {\it chronological} or {\it pre-commutative algebras} \cite {Agr-Gam1},\cite {Agr-Gam2},\cite {Kawski},\cite{Kolesnikov}.  

Anticommuative algebra with so-called ${\it Tortkara}$ identity  
\begin{equation}\label{Tortkara identity}(ab)(cb)=J(a,b,c)b
\end{equation}
is called {\it Tortkara} algebra.
Over a field of characteristic different from  two (\ref{Tortkara identity}) has the following multi-linear form
\begin{equation}\label{linear-Tortkara identity}(ab)(cd)+(ad)(cb)=J(a,b,c)d+J(a,d,c)b.
\end{equation}
Tortkara algebra was  defined in \cite{Dzhumadil'daev}, as minus-algebra $A^{(-)}=(A,[,],+)$ of Zinbiel algebra $(A,\circ,+)$, in other words,
any Zinbiel algebra  under the commutator product $[x,y]=x\circ y-y\circ x$ satisfies the Tortkara identity. 
Any Zinbiel algebra under the anti-commutator product $\{x,y\}=x\circ y+y\circ x$ is commutative and associative  \cite{Loday}.

Metabelian Lie and dual mock-Lie algebras are  examples of Tortkara algebras, see \cite{Zusmanovich}. Let $A$ be an algebra with identities
\begin{equation}\label{Tortkara example}
a(bc)=b(ac), \quad ([a,c],b,a)+([b,a],c,a)+([c,b],a,a)=0.
\end{equation}
It is not a Zinbiel algebra, but
its minus-algebra $A^{(-)}$ is a Tortkara algebra \cite{Dzhumadil'daev}.

As far as connections of Tortkara algebras with other classes of algebras we have the following facts. 
The variety of Tortkara algebras $\mathcal{T}$ is not subvariety of the variety of binary-Lie algebras. Consequently,
it is not a subvariety of variety of Malcev algebras. 
In \cite{Sokolov-Svinolupov} there was studied a variety of algebras defined by the following identity 
$$[a,b,cd]-[a,b,c]d-c[a,b,d]=0.$$
It contains Jordan, Lie, right-symmetric and LT-algebras. Tortkara algebras are out of this variety.

Tortkara algebra $B$ is called {\it special} if there exists a Zinbiel algebra $A$ such that $B$ is a subalgebra of $A^{(-)},$ otherwise it is called {\it exceptional}.

Let $A={\bf C}[x]$ be an algebra with multiplication
$$a\star b=b\int_0^x(\int^x_0a\,dx)dx.$$ Then $(A,\star)$ is not a Zinbiel algebra,
but $A^{(-)}$ is a Tortkara algebra \cite{Dzhumadil'daev}. It is an example of special Tortkara algebra 
 (see Remark 1 at the end of our paper). 

In our paper we exhibit a base of free special Tortkara algebras in terms of polynomials of free Zinbiel algebras. 

Let $\mathcal{ST}$  be the class of special Tortkara algebras. By $\overline{\mathcal{ST}}$ denote the homomorphic closure of $\mathcal{ST}$. In general, $$\mathcal{ST}\subseteq \overline{\mathcal{ST}}\subseteq\mathcal{T}$$ and
$\overline{\mathcal{ST}}$ is the smallest variety that contains $\mathcal{ST}.$

In \cite{Kolesnikov} P. Kolesnikov has proved that 
$$\mathcal{ST}\subsetneqq\overline{\mathcal{ST}}.$$
He has constructed a Tortkara algebra on four generators which is in $\overline{\mathcal{ST}}$ but not in $\mathcal{ST}.$  Furthermore, he has asked the question about the maximal number of free generators for which all homomorphic images of free special Tortkara algebra are special. In this paper we show that there exists an algebra in $\overline{\mathcal{ST}}$ on three generators which is not in $\mathcal{ST},$ and any algebra in $\overline{\mathcal{ST}}$ on two generators is also in $\mathcal{ST}.$ This is the analogue of classical Cohn's theorem \cite{Cohn} in Jordan theory for free special Tortkara algebras. The analogue of this theorem for a free special Jordan dialgebra is studied in \cite{Voronin}.

We further prove that a free Tortkara algebra on two generators is special. As a consequence, taking account the above result, we have that any Tortkara algebra on two generators is special. This is the analogue of Shirshov's theorem in theory of Jordan algebras. Shirshov's theorem for Malcev algebras was considered in \cite{Kornev}.  

An idenitity is called {\it special} if it holds in any Tortkara algebra in $\overline{\mathcal{ST}}$ but does not hold in some  Tortkara algebras. In general, we still do not know whether there exists a special identity. In \cite{Bremner},  M. Bremner by computer algebraic methods has studied special identities in terms of Tortkara triple product $[a,b,c]=[[a,b],c]$ in a free Zinbiel algebra and discovered one identity in degree 5 and one identity in degree 7 which do not follow from the triple identities of lower degree. We prove  that there is no special identity in two variables. It is known that there is no special identity up to degree seven in case a field has zero characteristic. 

All our algebras  are considered over a field ${\bf K}$ of characteristic 0.

\section{\label{mainresults} Main results}

\subsection{Definitions and notations}
Let $Zin(X)$ be a free Zinbiel algebra on a set $X.$ For $a_1,\ldots,a_n\in Zin(X)$ denote by $a_1a_2\cdots a_n$ a left-bracketed element $(\cdots(a_1\circ a_2)\cdots)\circ a_n.$ 
In \cite{Loday} there was proved that the following set of elements 
$$\mathcal{V}(X)=\cup_n\{x_{i_1}x_{i_2}\cdots x_{i_n}|x_1,\ldots x_{i_n}\in X\}$$
forms a base of the free Zinbiel algebra $Zin(X).$

Define a linear map $p:Zin(X)\rightarrow Zin(X)$ on base elements
as follows
$$p(x_i)=-x_i,$$ 
$$p(x_ix_j)=x_jx_i,$$
$$p( x_{i_1}x_{i_2}\cdots x_{i_{m}}yz)=x_{i_1}x_{i_2}\cdots x_{i_m}zy, \quad m\geq1.$$
 where $y,z\in X.$

For  $a\in Zin(X)\setminus X$ set
\begin{align*}\overline{a}
&\myeq \ 
a-p(a)\end{align*}  

Since $p^2=id,$ it is clear that
$$p(\overline{a})=-\overline{a}.$$

Let $n>1$ be an integer and let $\Gamma$ be set of sequences $\alpha=i_1 \cdots i_{n-1} i_n$ such that $i_{n-1}< i_n.$ 
For $\alpha=i_1\ldots i_{n-1}i_n\in \Gamma$ set 
$$x_\alpha={x_{i_1}\cdots x_{i_{n-1}}x_{i_n}}.$$

We call elements of the form  $\overline{x_\alpha}$, where $\alpha\in \Gamma,$ {\it skew-right-commutative} or shortly {\it skew-rcom} elements of $Zin(X).$

For example, if $X=\{a,b,u,v\},$ then
\begin{center}

$\overline{\strut(ab)(uv)}=(ab)(uv)-p((ab)(uv))=((ab)u)v+(u(ab))v-p(((ab)u)v+(u(ab))v)=$\\

$abuv+uabv+aubv-p(abuv+uabv+aubv)=abuv+uabv+aubv-abvu-uavb-auvb=$\\

$\overline{\strut abuv}+\overline{uabv}+\overline{aubv},$

$\overline{\strut x_{3124}}=uabv-uavb.$
\end{center}

An element $u\in Zin(X)$ is called {\it Lie element} if it can be presented as a word on $X$ under Lie product $[a,b]=ab-ba.$ Similarly, 
an element $u\in Zin(X)$ is called {\it Jordan element} if it can be presented as a word on $X$ in terms of Jordan product $\{a,b\}=ab+ba.$ 

Let $ST(X)$ be the free special Tortkara algebra on $X$ under Lie commutator, i.e., subalgebra of $Zin(X)^{(-)}=(Zin(X), [\;,\;])$ generated by $X.$ Let $J(X)$ be  subalgebra of $Zin(X)^{(+)}=(Zin(X), \{\;,\;\})$ generated by $X.$

Define {\it Dynkin map} $D:Zin(X)\rightarrow Zin(X)$ on base elements
as follows
$$D: x_{i_1}x_{i_2}\cdots x_{i_n}\mapsto\{\{\cdots\{x_{i_1},x_{i_2}\}\cdots \},x_{i_n}\}.$$

\subsection{Formulations of main results}

\begin{theorem}\label{Lie Criterion}
Let $f$ be a Zinbiel element of $Zin(X).$ Then $f$ is a Lie element if and only if $p(f)=-f.$  
\end{theorem}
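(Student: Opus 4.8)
The plan is to reformulate the statement geometrically. The map $p$ is an involution ($p^2=\mathrm{id}$); on the degree-one component it is $-\mathrm{id}$, while on the degree-$n$ component with $n\ge 2$ it is exactly the transposition of the last two tensor slots of a word. Hence $P_-:=\{f\in Zin(X):p(f)=-f\}$ is a graded subspace whose degree-$n$ part ($n\ge 2$) has the skew-rcom elements $\{\overline{x_\alpha}:\alpha\in\Gamma\}$ as a basis, while its degree-one part is all of $\mathrm{span}(X)$. The theorem is then equivalent to the equality $ST(X)=P_-$. The one auxiliary tool I would set up first is the explicit half-shuffle multiplication rule for the left-bracketed basis: writing a word as $v=v'z$ with $z\in X$ its last letter, one has $u\circ v=(u\ast v')\,z$, where $\ast$ denotes the commutative, associative shuffle product characterized by $a\ast b=a\circ b+b\circ a$. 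This follows from the Zinbiel identity \eqref{Zinbiel identity} and the basis $\mathcal V(X)$.

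For the inclusion $ST(X)\subseteq P_-$, since $X\subseteq P_-$ and $ST(X)$ is generated by $X$ under $[\,,\,]$, it suffices to show that $P_-$ is closed under the bracket, i.e. that $p([a,b])=-[a,b]$ whenever $p(a)=-a$ and $p(b)=-b$. Working in a fixed degree and writing $q=\tfrac12(\mathrm{id}+p)$ for the symmetrizer of the last two slots, this is equivalent to the symmetric identity $q(a\circ b)=q(b\circ a)$. I would prove it by expanding $a\circ b$ with the multiplication rule and applying $q$: the terms in which the last two letters both originate from $b$ cancel against each other by the antisymmetry of $b$, and the surviving terms are seen to coincide with those of $q(b\circ a)$ after invoking the antisymmetry of $a$ together with the commutativity of $\ast$. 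The degenerate cases where $a$ or $b$ has degree one are checked directly. This computation is mechanical once the multiplication rule is available.

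For the reverse inclusion $P_-\subseteq ST(X)$ I would argue by dimension. By the first inclusion $ST(X)_n\subseteq(P_-)_n$, so it is enough to produce $\dim(P_-)_n=\binom{|X|}{2}|X|^{\,n-2}$ linearly independent Lie elements inside $(P_-)_n$. The natural candidates are the left-normed brackets $[[\cdots[x_{j_1},x_{j_2}],\ldots],x_{j_n}]$ with $j_1<j_2$, whose number is exactly this. To prove their independence I would use a leading-monomial argument: each such bracket contains its ``straight'' word $x_{j_1}\cdots x_{j_n}$, and expanding via the multiplication rule shows that every other monomial arises by shuffling later letters forward. A cleaner alternative, which simultaneously identifies $ST(X)_n$ with $(P_-)_n$, is to show that the left-normed bracketing operator (the Lie analogue of the Dynkin map) restricts to an invertible map on $(P_-)_n$ — the low-degree computations suggest it acts there simply as multiplication by $2$ — since its image lies in $ST(X)$ by construction.

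The computation in the first inclusion, although the technical core, is routine once the half-shuffle rule is set up; the real obstacle is the nondegeneracy needed in the second inclusion. Concretely, the ``straight'' words need not be the lexicographically largest monomials appearing, since a monomial may begin with a larger letter, so one must choose the monomial order carefully — an order sensitive to the final letters, matching the action of $p$ — or instead establish the Dynkin--Specht--Wever-type identity that pins down the bracketing operator on $P_-$. Either route is where the main effort lies.
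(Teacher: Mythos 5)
Your reformulation of the theorem as the equality $ST(X)=P_-$, with the skew-rcom elements as a basis of $P_-$, and your plan for the inclusion $ST(X)\subseteq P_-$ (closure of $P_-$ under the commutator, proved by shuffle calculus) are in substance exactly what the paper does in Lemmas~\ref{Skew to Skew}, \ref{SkewRcomprudctgenerator}, \ref{commutator of Skews} and \ref{Lie is skew-rcom}; calling this step ``mechanical'' understates what is in fact two pages of computation, but the plan is sound. The problem is the reverse inclusion $P_-\subseteq ST(X)$, where you depart from the paper and where your argument has a genuine gap.

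Everything in your second inclusion rests on the linear independence of the $\binom{|X|}{2}|X|^{n-2}$ left-normed brackets $[[\cdots[x_{j_1},x_{j_2}],\ldots],x_{j_n}]$ with $j_1<j_2$, and this is never proved. Worse, your proposed ``cleaner alternative'' is provably impossible: the left-normed bracketing operator $L$ is \emph{not} invertible on $(P_-)_n$ for $n\ge 4$, let alone equal to $2\,\mathrm{id}$. Indeed, for distinct $a,b,c,d\in X$ a direct expansion (or the paper's Lemma~\ref{SkewRcomprudctgenerator}) gives
$$[[[a,b],c],d]-[[[a,b],d],c]=2\,\overline{\strut abcd}-2\,\overline{\strut bacd},$$
that is, $L(\overline{\strut abcd})=2\bigl(\overline{\strut abcd}-\overline{\strut bacd}\bigr)$; interchanging $a$ and $b$ gives $L(\overline{\strut bacd})=2\bigl(\overline{\strut bacd}-\overline{\strut abcd}\bigr)$, hence $L\bigl(\overline{\strut abcd}+\overline{\strut bacd}\bigr)=0$, so $L$ annihilates the whole part of $(P_-)_4$ symmetric in the first two letters. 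The pattern $L=2\,\mathrm{id}$ you extrapolated holds only in degrees $2$ and $3$. Your primary route (a leading-monomial argument) is left unexecuted, and you yourself note that the naive monomial order fails. Note also that the independence you need is strictly stronger than the theorem itself: given the first inclusion and the dimension count, it implies not only $ST(X)=P_-$ but also that left-normed elements span $ST(X)$, i.e. $ST(X)_n=ST(X)_{n-1}ST(X)_1$ --- a statement the paper proves only for two generators (Lemma~\ref{left-normed elements on 2 generators}) and whose analogue for free Tortkara algebras is posed as an open question at the end of the paper. By contrast, the paper's own proof of this inclusion (Lemma~\ref{SkewRcom is lie}) avoids any nondegeneracy claim: it shows by induction, using the product formulas and the symmetry of the first $n-2$ letters, that every skew-rcom element $\overline{\strut x_{i_1}\cdots x_{i_n}}$ lies in $ST(X)$. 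As it stands, the decisive step of your proof is missing, and the one concrete mechanism you offer to supply it is false.
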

\begin{theorem}\label{Skew element basis}
The set of skew-rcom elements $\overline{x_{\alpha},}$ where $ \alpha\in \Gamma,$ forms  base of $ST(X).$

Let   $ST(X)_{m_1,\ldots,m_q}$ be the homogenous part of $ST(X)$ generated by $m_i$ generators $x_i$ where $i=1,\ldots,q.$ 
Then  
$$dim\,ST(X)_{m_1,\ldots,m_q}=\sum_{i<j}\frac{(n-2)!}{m_1!\cdots m_q!}m_im_j$$
where $n=m_1+\dots+m_q.$ In particlular, the multilinear part of $ST(X)$ has dimension $\frac{q!}{2}.$ 
\end{theorem}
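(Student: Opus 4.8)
The plan is to read off everything from Theorem~\ref{Lie Criterion}, which identifies $ST(X)$ with the $(-1)$-eigenspace of the linear involution $p$, and then to exhibit the basis and count it combinatorially. First I would note that every $\overline{x_\alpha}$ lies in $ST(X)$: since $p^2=\mathrm{id}$ one has $p(\overline{x_\alpha})=-\overline{x_\alpha}$, so by Theorem~\ref{Lie Criterion} the element $\overline{x_\alpha}$ is a Lie element and hence belongs to $ST(X)$. More is true: as $ST(X)$ is spanned by Lie words, each of which satisfies $p(f)=-f$, we get $ST(X)\subseteq\{f:p(f)=-f\}$; conversely Theorem~\ref{Lie Criterion} says that every $f$ with $p(f)=-f$ is already a Lie element, hence in $ST(X)$. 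Thus $ST(X)$ coincides with the full $(-1)$-eigenspace $Zin(X)^{-}$ of $p$. The one point I regard as the crux is exactly this identification: since a Lie element is by definition a single word, one must use the ``if'' direction of Theorem~\ref{Lie Criterion} to conclude that the linear span of Lie words is no larger than the set of Lie words itself. Once this is in hand, the rest is bookkeeping.

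The basis assertion then becomes a statement about the involution $p$ on the monomial basis $\mathcal V(X)$. In each degree $n\geq 2$ the map $p$ merely transposes the last two letters of a left-normed monomial, so it is an involutive permutation of $\mathcal V(X)$ preserving multidegree. Its fixed monomials are exactly those with $i_{n-1}=i_{n}$ (they lie in the $+1$-eigenspace), while the remaining monomials occur in transposition pairs $\{x_\alpha,p(x_\alpha)\}$, each labelled by its unique representative $\alpha\in\Gamma$ with $i_{n-1}<i_{n}$. For such a permutation the $(-1)$-eigenspace is spanned by the differences $x_\alpha-p(x_\alpha)=\overline{x_\alpha}$, and these are linearly independent because distinct $\alpha$ involve disjoint pairs of monomials, so that the coefficient of $x_\alpha$ in $\sum_\alpha c_\alpha\overline{x_\alpha}$ equals $c_\alpha$. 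Hence $\{\overline{x_\alpha}:\alpha\in\Gamma\}$ is a basis of $Zin(X)^{-}=ST(X)$ in all degrees $n\geq 2$; the degree-one part is spanned by $X$ and is not counted by the formula, which requires $n\geq 2$.

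Finally I would count these basis elements in a fixed multidegree $(m_1,\dots,m_q)$ with $n=m_1+\dots+m_q$. There are $\frac{n!}{m_1!\cdots m_q!}$ left-normed monomials of this multidegree; the fixed ones, with $i_{n-1}=i_n=x_k$, number $\frac{(n-2)!}{m_1!\cdots m_q!}\,m_k(m_k-1)$ summed over $k$, and the non-fixed monomials split evenly between the two orderings of the last two letters. Therefore $\dim ST(X)_{m_1,\dots,m_q}$ is half of $\frac{(n-2)!}{m_1!\cdots m_q!}\bigl(n(n-1)-\sum_k m_k(m_k-1)\bigr)$, and the identity $n(n-1)-\sum_k m_k(m_k-1)=2\sum_{i<j}m_im_j$ turns this into the stated sum $\sum_{i<j}\frac{(n-2)!}{m_1!\cdots m_q!}m_im_j$. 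Setting every $m_i=1$ gives $(q-2)!\binom{q}{2}=\frac{q!}{2}$ for the multilinear part.

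In summary, the conceptual content is entirely carried by Theorem~\ref{Lie Criterion}; the remaining work is to translate the eigenspace decomposition of the involution $p$ into the explicit family $\overline{x_\alpha}$ and to verify the elementary multinomial identity above. The main obstacle is therefore not any single hard computation but rather the careful reconciliation of the word-level notion of ``Lie element'' with the subspace $Zin(X)^{-}$, together with the degree-one caveat, after which both the basis and the dimension count follow directly.
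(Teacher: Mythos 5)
Your proof is correct and takes essentially the same approach as the paper: both rest on Theorem~\ref{Lie Criterion} (equivalently, Lemmas~\ref{Lie is skew-rcom} and \ref{SkewRcom is lie}) to identify $ST(X)$ with the $(-1)$-eigenspace of $p$, and both obtain linear independence of the $\overline{x_\alpha}$ from the fact that distinct $\alpha\in\Gamma$ involve disjoint pairs of basis monomials $\{x_\alpha,p(x_\alpha)\}$. The only divergence is cosmetic bookkeeping: the paper counts the skew-rcom elements directly by their last two letters $x_i x_j$ with $i<j$, whereas you count half of the non-fixed monomials and invoke the identity $n(n-1)-\sum_k m_k(m_k-1)=2\sum_{i<j}m_i m_j$ --- both give the same formula, and your explicit remark about the degree-one part (spanned by $X$, not covered by the skew-rcom elements) is a point the paper leaves implicit.
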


\begin{corollary}\label{On last two elements in skew rcom}
Let $a,b,c\in Zin(X).$ If $b$ and $c$ are Lie, then $abc-acb$ and $bc-cb$  are Lie.
\end{corollary}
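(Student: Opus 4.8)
The plan is to derive both assertions from the Lie criterion of Theorem \ref{Lie Criterion}, so that it suffices to verify $p(abc-acb)=-(abc-acb)$ and $p(bc-cb)=-(bc-cb)$. I would first recast the condition $p(f)=-f$ in terms of terminal letters. Since $p$ transposes the last two letters of any base monomial of degree $\geq 2$ (and acts as $-\mathrm{id}$ in degree $1$, where the condition is automatic), ``$f$ is Lie'' is equivalent to: every homogeneous component of $f$ of degree $\geq 2$ is antisymmetric under swapping its last two letters. To make this usable I would introduce, for each generator $z$, the linear operator $\partial_z$ deleting a trailing $z$ (so $\partial_z(x_{i_1}\cdots x_{i_{n-1}}z)=x_{i_1}\cdots x_{i_{n-1}}$, and $\partial_z$ kills monomials with a different last letter). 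Writing $\partial_{yz}=\partial_y\partial_z$, the criterion becomes $\partial_{yz}(f)+\partial_{zy}(f)=0$ for all generators $y,z$.

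The crux, and the step I expect to demand the most care, is a pair of product rules for $\partial$. The point is that in a Zinbiel product $f\circ g$ the last letter is \emph{always} contributed by $g$; deleting it therefore removes the shuffling constraint and leaves the free shuffle of $f$ with the truncation of $g$. Setting $\ast=\{\,,\,\}$, the commutative and associative product of the Introduction, this reads $\partial_z(f\circ g)=f\ast\partial_z(g)$. For the shuffle $\ast$, by contrast, the last letter may come from either factor, so $\partial_z$ \emph{is} a derivation of $\ast$. Combining the two gives
\[
\partial_{yz}(f\circ g)=f\ast\partial_{yz}(g)+\partial_y(f)\ast\partial_z(g),
\]
and the delicate points will be verifying the first rule against the Zinbiel identity and handling degenerate degrees (e.g. $\deg b=1$, where $\partial_z(b)$ is a scalar) with the unit conventions for truncation.

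Granting these rules, the computation collapses. Applying the displayed identity to $abc=(ab)\circ c$ and using that $c$ is Lie, so $\partial_{yz}(c)+\partial_{zy}(c)=0$, the symmetric part becomes $\partial_{yz}(abc)+\partial_{zy}(abc)=\partial_y(ab)\ast\partial_z(c)+\partial_z(ab)\ast\partial_y(c)$; a second use of $\partial_y(ab)=a\ast\partial_y(b)$ and associativity of $\ast$ turn this into $a\ast\bigl(\partial_y(b)\ast\partial_z(c)+\partial_z(b)\ast\partial_y(c)\bigr)$. Since $\ast$ is commutative this is symmetric in $b$ and $c$, hence identical for $acb$, giving $\partial_{yz}(abc-acb)+\partial_{zy}(abc-acb)=0$ and, by the criterion, that $abc-acb$ is Lie. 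The same calculation with $a$ omitted yields $\partial_{yz}(bc)+\partial_{zy}(bc)=\partial_y(b)\ast\partial_z(c)+\partial_z(b)\ast\partial_y(c)$, again symmetric in $b,c$, so $bc-cb$ is Lie as well; equivalently $bc-cb=[b,c]$ lies in $ST(X)$ because, by Theorem \ref{Skew element basis}, $ST(X)$ is a subalgebra of $Zin(X)^{(-)}$ and hence closed under the bracket.
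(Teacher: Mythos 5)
Your proposal is correct, but it takes a genuinely different route from the paper's proof. The paper argues concretely: it writes $b=\sum_\alpha\lambda_\alpha\overline{x_\alpha}$ and $c=\sum_\beta\mu_\beta\overline{y_\beta}$ in the skew-rcom basis (Theorem \ref{Skew element basis}), reduces to the single difference $a\overline{x_\alpha}\,\overline{y_\beta}-a\overline{y_\beta}\,\overline{x_\alpha}$, and expands it by hand via parts \textbf{b} and \textbf{c} of Proposition \ref{shuffle product} until all unbarred terms cancel and only skew-rcom (barred) elements remain, whereupon Theorem \ref{Lie Criterion} applies; the low-degree cases ($m$ or $n$ equal to $2$) are treated separately. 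Your truncation operators package exactly the cancellation the paper performs by hand: the rule $\partial_z(f\circ g)=f\sh\partial_z(g)$ is a restatement of part \textbf{b} of Proposition \ref{shuffle product} (the last letter of a Zinbiel product always comes from the right factor), the derivation property of $\partial_z$ for the shuffle is part \textbf{c}, and your identification of $\ast=\{\,,\,\}$ with the shuffle is legitimate since $u\sh v=u\circ v+v\circ u$ (parts \textbf{a}--\textbf{c} again); given these, commutativity and associativity of $\sh$ make the $b\leftrightarrow c$ symmetry, and hence the conclusion, immediate and uniform in all degrees. What each approach buys: the paper's computation produces explicit skew-rcom expansions of the products involved, which is information in itself, while yours is shorter, avoids both the basis decomposition and the degree case-split, and isolates a reusable Leibniz-type principle. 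Two details to make explicit in a final write-up: in the $acb$ computation the term $(ac)\ast\bigl(\partial_{yz}(b)+\partial_{zy}(b)\bigr)$ vanishes because $b$ is Lie, so both hypotheses are genuinely used, not just the one on $c$; and $bc-cb=[b,c]\in ST(X)$ holds already by the definition of $ST(X)$ as a subalgebra of $Zin(X)^{(-)}$, so Theorem \ref{Skew element basis} need not be invoked there.
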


\begin{theorem}\label{a Jordan criterion}
Let $f$ be a homogenous Zinbiel  element of degree $n$ in $Zin(X).$ Then $f$ is a Jordan element  if and only if $D(f)=n!f.$  
The algebra  $J(X)$ is isomorphic to polynomial algebra $K[X].$ 
\end{theorem}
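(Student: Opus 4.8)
The plan is to reduce both assertions to a single combinatorial identity: that the Dynkin map $D$ computes the full symmetrization of a basis word. First I would prove, by induction on $n$, that for every word $w=x_{i_1}\cdots x_{i_n}$ one has
$$
D(w)=\{\{\cdots\{x_{i_1},x_{i_2}\}\cdots\},x_{i_n}\}=\sum_{\sigma\in S_n}x_{i_{\sigma(1)}}x_{i_{\sigma(2)}}\cdots x_{i_{\sigma(n)}},
$$
where each summand is a left-bracketed basis element of $Zin(X)$. The inductive step rests on an auxiliary \emph{insertion formula}, $x\circ(y_1\cdots y_m)=\sum_{p=1}^{m}y_1\cdots y_{p-1}\,x\,y_p\cdots y_m$, which itself follows from the Zinbiel identity \eqref{Zinbiel identity} by a short induction on $m$ (the base cases $m=1,2$ are direct, and the step expands $x\circ((y_1\cdots y_{m-1})\circ y_m)$). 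Granting it, I write $D(w)=D(x_{i_1}\cdots x_{i_{n-1}})\circ x_{i_n}+x_{i_n}\circ D(x_{i_1}\cdots x_{i_{n-1}})$; the first term inserts $x_{i_n}$ at the final slot of each $(n-1)$-permutation, while the insertion formula applied to the second term inserts $x_{i_n}$ at every other slot. Each $\sigma\in S_n$ is then produced exactly once, recovered by deleting $x_{i_n}$ and recording its position. I expect this bookkeeping to be the main, though routine, obstacle.

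With the identity in hand the criterion is essentially formal. Since the anti-commutator $\{\,,\}$ is commutative and associative on $Zin(X)$ \cite{Loday}, every Jordan element $f$ of degree $n$ is a $K$-linear combination of iterated products $\{\{\cdots\{x_{i_1},x_{i_2}\}\cdots\},x_{i_n}\}=D(w)$; hence $J(X)$ equals the linear span of the symmetrized words $\operatorname{Sym}(w):=\sum_{\sigma}x_{i_{\sigma(1)}}\cdots x_{i_{\sigma(n)}}$. For the forward direction it suffices to verify $D(\operatorname{Sym}(w))=n!\,\operatorname{Sym}(w)$: applying $D$ termwise and noting that $D$ sends every rearrangement of $w$ to the same element $\operatorname{Sym}(w)$, the sum consists of $n!$ equal summands. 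For the converse, the image of $D$ lies in $J(X)$ by the identity, so $D(f)=n!\,f$ together with $\operatorname{char}K=0$ yields $f=\tfrac{1}{n!}D(f)\in J(X)$, i.e.\ $f$ is Jordan.

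Finally, for the algebra isomorphism, the assignment $x_i\mapsto x_i$ extends to a surjective homomorphism $K[X]\to J(X)$ of commutative associative algebras, because $(Zin(X),\{\,,\})$ is commutative and associative and $J(X)$ is generated by $X$. Injectivity reduces to linear independence of the images of the monomials: a monomial of multidegree $(a_1,\ldots,a_q)$ maps to $\operatorname{Sym}(w_m)$, which by the identity equals $\bigl(\prod_i a_i!\bigr)$ times the sum of the distinct rearrangements of a word $w_m$ with that content. Distinct monomials yield symmetrizations supported on disjoint sets of Zinbiel basis words, and each such element is nonzero since $\operatorname{char}K=0$; hence the images are linearly independent, the surjection is injective, and $J(X)\cong K[X]$.
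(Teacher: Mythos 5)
Your proposal is correct and follows essentially the same route as the paper: the heart of both arguments is the identity $D(x_{i_1}\cdots x_{i_n})=\sum_{\sigma\in S_n}x_{i_{\sigma(1)}}\cdots x_{i_{\sigma(n)}}$ proved by induction (your insertion formula is exactly the special case of the paper's Proposition~\ref{Loday's prop} used in its Lemma~\ref{a Jordan element in a free Zinbiel algebra}), after which the criterion follows formally from commutativity and associativity of $\{\,,\}$, and the isomorphism $J(X)\cong K[X]$ comes from the canonical surjection whose injectivity you justify by the disjoint-support argument that the paper leaves as ``clear.'' Your write-up merely makes explicit the steps the paper compresses, which is a feature rather than a deviation.
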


Let $T(X)$ be a free Tortkara algebra generated by a set $X.$
\begin{theorem}\label{Speciality of $T({x,y})$}
The free Tortkara algebra $T(\{x,y\})$ is special.
\end{theorem}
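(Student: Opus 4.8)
The plan is to realize the statement as an isomorphism onto the free special algebra. Since the subalgebra of $Zin(\{x,y\})^{(-)}$ generated by $x,y$ is exactly $ST(\{x,y\})$ and is a Tortkara algebra, the universal property of the free Tortkara algebra furnishes a homomorphism $\varphi\colon T(\{x,y\})\to ST(\{x,y\})$ with $\varphi(x)=x$, $\varphi(y)=y$. Because $ST(\{x,y\})$ is generated by $\{x,y\}$, the map $\varphi$ is surjective, and because $ST(\{x,y\})$ is by definition a subalgebra of a Zinbiel minus-algebra, it is special. Hence it suffices to prove that $\varphi$ is injective, that is, an isomorphism: then $T(\{x,y\})\cong ST(\{x,y\})$ is special.

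Both algebras carry a grading by multidegree $(m_1,m_2)$ in $x,y$, and $\varphi$ respects it, so injectivity may be checked on each finite-dimensional component $T(\{x,y\})_{m_1,m_2}$. Surjectivity already gives $\dim T(\{x,y\})_{m_1,m_2}\ge \dim ST(\{x,y\})_{m_1,m_2}$, so the entire problem reduces to the reverse inequality. By Theorem~\ref{Skew element basis}, for two generators the only pair $i<j$ is $(1,2)$, so the target dimension is
$$\dim ST(\{x,y\})_{m_1,m_2}=\frac{(n-2)!}{m_1!\,m_2!}\,m_1 m_2=\binom{n-2}{m_1-1},\qquad n=m_1+m_2,$$
and summing over $m_1+m_2=n$ gives total degree-$n$ dimension $2^{n-2}$. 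Thus I must produce a spanning set of $T(\{x,y\})_{m_1,m_2}$ of cardinality at most $\binom{n-2}{m_1-1}$.

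The natural candidate is the family of left-normed \emph{seeded} monomials $(\cdots((xy)z_1)\cdots)z_{n-2}$ with each $z_i\in\{x,y\}$: anticommutativity annihilates the seeds $xx$ and $yy$, so exactly $2^{n-2}$ such words remain, with $\binom{n-2}{m_1-1}$ of them in multidegree $(m_1,m_2)$, matching the target count precisely. The remaining task is to show these words \emph{span} $T(\{x,y\})$. I would argue by induction on degree: using anticommutativity to normalize any product so that its left factor is a lower-degree seeded word, and then invoking the multilinear Tortkara identity \eqref{linear-Tortkara identity} to transport and absorb the right factor into left-normed position, the Jacobian terms $J(a,b,c)$ it produces being shorter products to which the inductive hypothesis applies. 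As a consistency check, Theorem~\ref{Lie Criterion} identifies the images under $\varphi$ of the seeded words with Lie elements, and these should recover exactly the skew-rcom basis $\overline{x_\alpha}$, $\alpha\in\Gamma$, of $ST(\{x,y\})$.

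The main obstacle is this reduction step itself: verifying that anticommutativity together with the single degree-four relation \eqref{linear-Tortkara identity} is enough to rewrite every bracketing into seeded left-normed form, without the Jacobian corrections generating more than $\binom{n-2}{m_1-1}$ independent words. In a general anticommutative algebra left-normed monomials need not span, so the Tortkara identity must be shown to supply precisely the missing relations; controlling this requires a monomial order under which each application of \eqref{linear-Tortkara identity} strictly decreases complexity, i.e. a confluence (Gröbner–Shirshov type) argument. Once the spanning bound $\dim T(\{x,y\})_{m_1,m_2}\le\binom{n-2}{m_1-1}$ is secured, the two dimension counts coincide in every multidegree, the surjective graded map $\varphi$ is forced to be an isomorphism, and the speciality of $T(\{x,y\})$ follows.
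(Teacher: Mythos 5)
Your overall strategy coincides with the paper's: map $T(\{x,y\})$ onto $ST(\{x,y\})$ by the universal property, show the source is spanned by left-normed (seeded) words, and compare dimensions with the skew-rcom basis of Theorem~\ref{Skew element basis} (your counts $\binom{n-2}{m_1-1}$ per multidegree and $2^{n-2}$ per degree are correct, and the surjectivity-plus-equal-dimension conclusion is sound). But there is a genuine gap exactly where you flag ``the main obstacle'': the claim that the seeded left-normed monomials span $T(\{x,y\})$ is never proved. You sketch a reduction and then assert that controlling it requires a monomial order and a confluence (Gr\"obner--Shirshov type) argument, which you do not supply. Since everything else in your proposal is routine linear algebra, this unproved spanning claim is the entire mathematical content of the theorem, and leaving it open means the theorem is not established.

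Moreover, the machinery you anticipate needing is not actually needed, and seeing why is what makes the proof elementary. Confluence is relevant when one wants the reduced words to be linearly \emph{independent}; for a \emph{spanning} statement no monomial order or analysis of ambiguities is required, only a terminating reduction, and independence then comes for free at the end, because a surjection between components of equal finite dimension is injective. In particular your worry that the Jacobian corrections might ``generate more than $\binom{n-2}{m_1-1}$ independent words'' conflates the two issues: once a set of that cardinality spans a component, the dimension is automatically bounded by it. The paper closes the gap with Lemma~\ref{left-normed elements on 2 generators} ($T_{n+1}=T_nT_1$), proved by a double induction, on the total degree and, inside it, on the degree $l$ of the right factor of a product $C=(A_{k-1}a_k)(B_{l-1}b_l)$. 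The case $l=2$ is killed directly by the Tortkara identity \eqref{Tortkara identity}, since $(A_{k-1}x)(yx)=J(A_{k-1},x,y)x$ lies in (degree $n-1$ elements)$\cdot T_1$; for $l>2$, anticommutativity and the linearized identity \eqref{linear-Tortkara identity} rewrite $C$ as $(A_{k-1}B_{l-1})(b_la_k)-J(A_{k-1},a_k,b_l)B_{l-1}-J(A_{k-1},B_{l-1},b_l)a_k$, and each summand is handled by the case $l=2$, the induction on $l$, or directly by the induction on degree. Carrying out this induction (rather than invoking rewriting theory) is precisely what your proposal is missing. A minor further inaccuracy: the image under $\varphi$ of a seeded word is in general a linear combination of skew-rcom elements, not a single basis element $\overline{x_\alpha}$; this does not affect the dimension count, but your ``consistency check'' as stated is too strong.
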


The next theorem is an analogue of Cohn's theorem on speciality of homomorphic images of special Jordan algebras in two generators \cite{Cohn}. 

\begin{theorem}\label{speciality criterion on two generators}
Any homomorphic image of a free special Tortkara algebra on two generators is special.
For three generators case this statement is not true: a homomorphic image of special Tortkara algebra with three generators might be non special.
\end{theorem}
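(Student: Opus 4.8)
The plan is to reduce the speciality of a homomorphic image to a problem about lifting ideals from $ST(X)$ to the free Zinbiel algebra. Let $I$ be a Tortkara ideal of $ST(X)$, put $B=ST(X)/I$, and let $\widetilde I$ be the two-sided Zinbiel ideal of $Zin(X)$ generated by $I$. Because $B$ is generated by the images of $X$, any embedding of $B$ into a minus-algebra $Z^{(-)}$ extends, by freeness of $Zin(X)$, to a Zinbiel homomorphism $Zin(X)\to Z$ that kills $\widetilde I$; hence it factors through the canonical map $B\to\bigl(Zin(X)/\widetilde I\bigr)^{(-)}$ sending $a+I\mapsto a+\widetilde I$. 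Thus $B$ is special if and only if this canonical map is injective, i.e. if and only if
$$\widetilde I\cap ST(X)=I.$$
Since $I\subseteq\widetilde I\cap ST(X)$ always holds, both halves of the theorem become the single question of when this intersection is exactly $I$.

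For $X=\{x,y\}$ I would exploit the involution $p$. By Theorem~\ref{Lie Criterion}, $ST(X)=\ker(p+\mathrm{id})$ is precisely the $(-1)$-eigenspace of $p$, and $I$, consisting of Lie elements, satisfies $p(I)=I$ setwise. The first step is to verify that $p$ stabilizes $\widetilde I$; then the commuting projections $\tfrac12(\mathrm{id}\pm p)$ split $\widetilde I=\widetilde I^{+}\oplus\widetilde I^{-}$ into $p$-eigenspaces, so that $\widetilde I\cap ST(X)=\widetilde I^{-}$. The heart of the matter is to prove $\widetilde I^{-}=I$, and here the two-generator hypothesis is decisive: by Theorem~\ref{Skew element basis} the condition $i_{n-1}<i_n$ forces the last two letters of every skew-rcom basis element to be $xy$, so $ST(\{x,y\})$ has the explicit basis $\{\overline{wxy}\}$ together with $\overline{xy}=[x,y]$. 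Using Corollary~\ref{On last two elements in skew rcom} to trade Zinbiel products on the last letters for Lie (Tortkara) products, I would show that any anti-$p$-invariant element obtained by multiplying generators of $I$ by arbitrary Zinbiel words can be rewritten, modulo $I$, back inside $ST(\{x,y\})$, so no new Lie elements are created. I expect this growth-control step — that the Lie part of the enveloping ideal does not exceed $I$ — to be the main obstacle; it is the exact analogue of Cohn's lemma that, for two generators, the symmetric (here, Lie) elements of the associative envelope are already Jordan (here, Tortkara) generated by $I$.

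For the negative, three-generator statement the same reformulation shows it suffices to exhibit one Tortkara ideal $I$ of $ST(\{x,y,z\})$ with $\widetilde I\cap ST(\{x,y,z\})\supsetneq I$; equivalently, a nonzero $g\in ST(\{x,y,z\})\setminus I$ that nevertheless lies in the Zinbiel ideal $\widetilde I$. I would produce such an $I$ and such a witness $g$ by sharpening Kolesnikov's four-generator construction and the degree-$5$ special phenomenon found by Bremner down to three generators: choosing the defining relations so that a specific skew-rcom element is dragged into $\widetilde I$ by Zinbiel multiplications that simply are not available inside $ST$ itself. The delicate point is verifying simultaneously that $g\in\widetilde I$ and that $g\notin I$, i.e. that the three-generated Tortkara algebra $B$ is genuinely nonzero on $g$. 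I would settle this by realizing $B$ explicitly on the skew-rcom basis and checking, through the Tortkara multiplication truncated in the relevant degrees, that $g$ survives in the quotient while being annihilated in every Zinbiel minus-algebra image, thereby exhibiting a non-special homomorphic image on three generators.
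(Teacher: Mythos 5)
Your opening reduction is fine: it is exactly Cohn's criterion, which the paper simply cites (Theorem 2.2 of \cite{Cohn}), in the form $ST(X)/\alpha$ is special iff $\{\alpha\}\cap ST(X)\subseteq\alpha$. The problem is that from that point on, both halves of your argument are programs rather than proofs, and what you defer is precisely where all the work lies. For two generators: (i) your eigenspace splitting of $\widetilde I$ requires the lemma that $p$ stabilizes the Zinbiel ideal $\widetilde I$; since $p$ is only a linear map defined on the monomial basis (it is not a Zinbiel endomorphism or anti-endomorphism), this stability is not automatic, and you never verify it. (ii) Much more seriously, the inclusion $\widetilde I^{-}\subseteq I$ --- which you yourself call ``the main obstacle'' --- \emph{is} the theorem, and you give no argument for it. The paper's proof consists exactly of this step: write $w\in\{\alpha\}\cap ST(\{x,y\})$ as a combination of left-normed monomials in $x,y$ and the generators $g_i=\overline{f_ixy}$, then analyze where a $g_i$ can sit. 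The decisive computation (Case 2 of the paper) expands $ux\,\overline{f_ixy}$, observes which non-Lie terms (such as $f_iyuxx$) must cancel inside $w$, checks that among all degree-matched monomials ($u\overline{f_ixy}x$, $f_ix\overline{uxy}$, $f_i\overline{uxy}x$) only $u\overline{f_ixy}x$ can supply the needed cancellation, and finally uses Corollary \ref{On last two elements in skew rcom} to conclude $ux\,\overline{f_ixy}-u\,\overline{f_ixy}\,x\in\alpha$. Naming Corollary \ref{On last two elements in skew rcom} as the right tool is not the same as carrying out this cancellation argument; without it you have not proved speciality.

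For three generators you produce no counterexample at all: ``sharpening Kolesnikov's four-generator construction and Bremner's degree-5 phenomenon'' is a research plan, and nothing in your text certifies that a three-generator witness exists, which is the entire content of the second assertion. The paper's witness is short and explicit: take the ideal $\alpha$ of $ST(\{x,y,z\})$ generated by $g_1=\overline{yyz}$, $g_2=\overline{yxz}$, $g_3=\overline{yxy}$, and set $w=xg_1-yg_2+zg_3=\overline{xyyz}-\overline{yyxz}+\overline{zyxy}$. Then $w\in\{\alpha\}\cap ST(\{x,y,z\})$ by construction, while a multidegree comparison shows the only elements of $\alpha$ of the same multidegree are the combinations $\lambda_1[x,g_1]+\lambda_2[y,g_2]+\lambda_3[z,g_3]$, and none of these equals $w$; hence $w\notin\alpha$ and $ST(\{x,y,z\})/\alpha$ is not special by Cohn's criterion. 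So: correct framework and a correct identification of the relevant structure (skew-rcom elements in two variables end in $xy$; Corollary \ref{On last two elements in skew rcom} controls the last two positions), but the two steps that actually prove the theorem --- the cancellation analysis in two generators and the explicit three-generator counterexample --- are missing.
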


\begin{corollary}\label{Shirshov's theorem}
Any Tortkara algebra on two generators is special.
\end{corollary}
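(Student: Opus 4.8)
The plan is to obtain the corollary as a formal consequence of the two preceding theorems together with the universal property of the free Tortkara algebra. First I would let $B$ be an arbitrary Tortkara algebra generated by two elements $b_1,b_2$. Since $T(\{x,y\})$ is free on $\{x,y\}$, the assignment $x\mapsto b_1$, $y\mapsto b_2$ extends uniquely to a Tortkara algebra homomorphism $\varphi:T(\{x,y\})\to B$; because $b_1,b_2$ generate $B$, the map $\varphi$ is surjective. Thus every two-generated Tortkara algebra is a homomorphic image of $T(\{x,y\})$.

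Next I would invoke Theorem \ref{Speciality of $T({x,y})$}, which says that $T(\{x,y\})$ is special. By the definition of speciality this means the canonical surjection $\pi:T(\{x,y\})\to ST(\{x,y\})$ onto the free special Tortkara algebra, induced by sending generators to generators, is injective and hence an isomorphism $T(\{x,y\})\cong ST(\{x,y\})$. Composing, I would conclude that $B$ is a homomorphic image of the free special Tortkara algebra $ST(\{x,y\})$ on two generators.

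Finally I would apply Theorem \ref{speciality criterion on two generators}, which states exactly that every homomorphic image of a free special Tortkara algebra on two generators is special. Applied to $B$, realized as a homomorphic image of $ST(\{x,y\})$ via the identification above, this gives that $B$ is special, which is the assertion of the corollary.

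Since each step is a direct application of an already established result, I expect no genuine obstacle here; the only point demanding care is the identification $T(\{x,y\})\cong ST(\{x,y\})$. This is precisely the content of Theorem \ref{Speciality of $T({x,y})$}: it guarantees that the two-generated free Tortkara algebra satisfies no relations beyond those forced by speciality, so that the classes of homomorphic images of $T(\{x,y\})$ and of $ST(\{x,y\})$ coincide and the hypothesis of Theorem \ref{speciality criterion on two generators} is applicable.
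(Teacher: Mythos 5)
Your proposal is correct and follows exactly the paper's route: the paper proves this corollary in one line by combining Theorem \ref{Speciality of $T({x,y})$} (giving $T(\{x,y\})\cong ST(\{x,y\})$) with Theorem \ref{speciality criterion on two generators} (speciality of homomorphic images of $ST(\{x,y\})$), which is precisely the argument you spell out. Your write-up merely makes explicit the universal-property step that the paper leaves implicit.
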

\begin{proof} It follows from Theorems \ref{Speciality of $T({x,y})$} and \ref{speciality criterion on two generators}.
\end{proof}
This result is an analogue of Shirshov theorem for Jordan algebras \cite{ZSSS}.

\section{\label{nn}\ Skew-right-commutative Zinbiel elements  and Lie elements in $Zin(X)$} 

\subsection{Shuffle permutations}

Let $Sh_{m,n}$ be set of shuffle permutations, i.e.,
$$Sh_{m,n}=\{\sigma\in S_{m+n} | \sigma(1)<\cdots<\sigma(m), \sigma(m+1)<\cdots\sigma(m+n)\}.$$
For any positive integers  $i_1,\ldots,i_m,j_1,\ldots,j_n$ denote by 
$Sh(i_1\ldots i_m;j_1\ldots j_n)$ set of  sequences $\alpha=\alpha_1\ldots\alpha_{n+m}$ constructed by 
shuffle permutations $\sigma\in Sh_{m,n}$  by changing $\alpha_{\sigma(l)}$ to $i_l$ if $l\le m$ and to $j_{l-m}$ if $m<l\le m+n.$

For example,
$$Sh(12;34)=\{1234, 1324, 3124, 1342, 3142,3412\},$$
$$Sh(23;41)=\{2341, 2431, 4231, 2413, 4213,4123\}.$$
The following proposition was proved in \cite{Loday} for free left-Zinbiel algebras, below we give it for free right-Zinbiel algebras.
\begin{prp}\label{Loday's prop} 
$$(x_{i_1}\cdots x_{i_p})\circ (x_{j_1}\cdots x_{j_q})=\sum_{\sigma\in Sh(i_1\ldots i_p; j_1\ldots j_{q-1})}x_{\sigma(1)}\cdots x_{\sigma(p+q-1)}x_{j_q}.$$
\end{prp}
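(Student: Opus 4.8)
The plan is to prove the formula by induction on the total degree $n=p+q$, using the Zinbiel identity (\ref{Zinbiel identity}) to strip off the last letter $x_{j_q}$ of the right-hand factor and then identifying the resulting two sums with the two halves of a shuffle decomposition. Throughout I write $u=x_{i_1}\cdots x_{i_p}$ for the left factor.

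For the base of the induction I would take $q=1$ and arbitrary $p$. Here the right factor is a single generator $x_{j_1}$, so by the very definition of the left-bracketed notation $u\circ x_{j_1}=x_{i_1}\cdots x_{i_p}x_{j_1}$, while the right-hand side runs over $Sh(i_1\ldots i_p;\,\emptyset)$, which contains only the sequence $i_1\ldots i_p$; the two sides coincide.

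For the inductive step, suppose $q\ge 2$ and set $w=x_{j_1}\cdots x_{j_{q-1}}$, so that the right factor equals $w\circ x_{j_q}$. Applying (\ref{Zinbiel identity}) with $a=u$, $b=w$, $c=x_{j_q}$ yields
$$u\circ(w\circ x_{j_q})=\left(u\circ w+w\circ u\right)\circ x_{j_q}.$$
Both $u\circ w$ and $w\circ u$ have total degree $p+(q-1)=n-1$, so the induction hypothesis expands each of them as a sum of left-bracketed words of length $p+q-1$; moreover post-multiplication by $x_{j_q}$ is, by definition of the notation, nothing but appending the letter $x_{j_q}$, and it distributes over these sums by bilinearity of $\circ$. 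Concretely, $u\circ w$ contributes the terms whose shuffled part lies in $Sh(i_1\ldots i_p;\,j_1\ldots j_{q-2})$ followed by $x_{j_{q-1}}$, while $w\circ u$ contributes the terms whose shuffled part lies in $Sh(j_1\ldots j_{q-1};\,i_1\ldots i_{p-1})$ followed by $x_{i_p}$.

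The heart of the argument is the combinatorial identity matching these two families to $Sh(i_1\ldots i_p;\,j_1\ldots j_{q-1})$. I would note that, because a shuffle preserves the internal order of each of the two words, the last shuffled position of any $\sigma\in Sh(i_1\ldots i_p;\,j_1\ldots j_{q-1})$ is forced to be occupied by $x_{i_p}$ or by $x_{j_{q-1}}$. Deleting that final letter sets up a bijection between the shuffles ending in $x_{j_{q-1}}$ and $Sh(i_1\ldots i_p;\,j_1\ldots j_{q-2})$, and between the shuffles ending in $x_{i_p}$ and $Sh(i_1\ldots i_{p-1};\,j_1\ldots j_{q-1})=Sh(j_1\ldots j_{q-1};\,i_1\ldots i_{p-1})$. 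These are exactly the two families produced above, so appending $x_{j_q}$ to every term reproduces the right-hand side of the proposition. I expect the only real obstacle to be bookkeeping: tracking which of the two words loses its last letter in each summand and checking that the shuffles ending in $x_{i_p}$ and those ending in $x_{j_{q-1}}$ partition $Sh(i_1\ldots i_p;\,j_1\ldots j_{q-1})$ with no overlap and no omission. All the algebra is carried by the single application of the Zinbiel identity; no further axioms are needed, and the edge cases $p=1$ or $q=2$ cause no trouble since the empty shuffle is then interpreted as the trivial one-element set.
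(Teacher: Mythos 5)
Your proof is correct, but there is nothing in the paper to measure it against: the paper does not prove this proposition at all. It is quoted from \cite{Loday}, where it was established for free left-Zinbiel algebras, and is merely restated here in the right-Zinbiel convention. Your induction on $p+q$ --- one application of the identity (\ref{Zinbiel identity}) to write $u\circ(w\circ x_{j_q})=(u\circ w+w\circ u)\circ x_{j_q}$, expansion of $u\circ w$ and $w\circ u$ by the induction hypothesis, and appending $x_{j_q}$ by bilinearity --- in effect supplies the omitted proof, and it is the natural (essentially Loday's) argument transposed to the right-handed setting. The combinatorial step you flag as the heart of the matter does hold exactly as you describe: since a shuffle preserves the internal order of each word, the final letter of any $\sigma\in Sh(i_1\ldots i_p;\,j_1\ldots j_{q-1})$ is $i_p$ or $j_{q-1}$, and deleting it gives bijections onto $Sh(i_1\ldots i_{p-1};\,j_1\ldots j_{q-1})$ and $Sh(i_1\ldots i_p;\,j_1\ldots j_{q-2})$ respectively, these two families being disjoint because the dichotomy is governed by whether the last position is occupied by a letter of the first or of the second word (a condition on the permutation, so no difficulty arises even when $i_p$ and $j_{q-1}$ denote the same generator). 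Together with the symmetry $Sh(i_1\ldots i_{p-1};\,j_1\ldots j_{q-1})=Sh(j_1\ldots j_{q-1};\,i_1\ldots i_{p-1})$, needed to recognize the terms coming from $w\circ u$, this matches the two families precisely with the claimed right-hand side; your treatment of the base case $q=1$ and of the degenerate cases $p=1$, $q=2$ is also correct.
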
 

For any  two base elements $u=x_{i_1}\cdots x_{i_p}, v=x_{i_{p+1}}\cdots x_{i_{p+q}}\in Zin(X)$ define their {\it shuffle product} by 
$$u\sh v=\sum_{\sigma\in Sh(i_1\ldots i_p;i_{p+1}\ldots i_{p+q})}x_{\sigma(1)}\cdots x_{\sigma(p+q)}.$$

\begin{prp}\label{shuffle product} The shuffle product on $Zin(X)$ has the following properties

\begin{itemize}
\item[ \bf a.] the shuffle product is commutative and associative
$$a\sh b=b\sh a, \quad (a\sh b)\sh c=a\sh (b\sh c), \quad \mbox{for any} \quad a,b,c \in Zin(X).$$
\item[\bf b.]
$(x_{i_1}\cdots x_{i_p})\circ (x_{j_1}\cdots x_{j_q})=\begin{cases}  	
x_{i_1}\cdots x_{i_p}x_{j_1} \quad \mbox{for} \quad q=1, \\ (x_{i_1}\cdots x_{i_p}\sh x_{j_1}\cdots x_{q_{j-1}})\circ x_{j_q}  \quad \mbox{for} \quad q>1. \end{cases}$
 
\item[\bf c.] 
$(x_{i_1}\cdots x_{i_p})\sh (x_{j_1}\cdots x_{j_q})=$ 

$\begin{cases}  	
x_{i_1}x_{j_1}+x_{j_1}x_{i_1} \quad \mbox{for} \quad p=q=1, \\ (x_{i_1}\sh x_{j_1}\cdots x_{j_{q-1}})\circ x_{j_q}+x_{j_1}\cdots x_{j_q}x_{i_1}  \quad \mbox{for} \quad p=1,q>1,\\(x_{i_1}\cdots x_{i_{p-1}} \sh x_{j_1}\cdots x_{j_q})\circ x_{i_p}+(x_{i_1}\cdots x_{i_p}\sh x_{j_1}\cdots x_{j_{q-1}})\circ x_{j_q}  \quad \mbox{for} \quad p,q>1. \end{cases}$
\end{itemize}
\end{prp}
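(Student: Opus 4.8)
The plan is to treat the three assertions in increasing order of difficulty: part \textbf{b} is a direct reformulation of Loday's Proposition~\ref{Loday's prop}, part \textbf{c} is the standard ``last-letter'' recursion for the shuffle, and part \textbf{a} records the classical commutativity and associativity of the shuffle product. Throughout I will use that $\sh$ is, by definition, the bilinear extension of the operation sending a pair of basis words to the sum of all their interleavings that preserve the internal order of each factor; once the Zinbiel product $\circ$ has been rewritten in shuffle terms, every claim becomes purely combinatorial.

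For part \textbf{b} I would start from Loday's Proposition~\ref{Loday's prop}. When $q=1$ the index set $Sh(i_1\ldots i_p;\,)$ consists of the single empty-shuffle term, so $(x_{i_1}\cdots x_{i_p})\circ x_{j_1}=x_{i_1}\cdots x_{i_p}x_{j_1}$, which is the first case. For $q>1$, Loday's formula writes the product as a sum of words each ending in $x_{j_q}$, whose truncations run exactly over $Sh(i_1\ldots i_p;j_1\ldots j_{q-1})$; by definition this truncated sum is $x_{i_1}\cdots x_{i_p}\sh x_{j_1}\cdots x_{j_{q-1}}$, and since $\circ$ is bilinear and right-multiplication by a single generator merely appends it (the $q=1$ case just proved), reattaching the trailing $x_{j_q}$ is precisely $(\,\cdot\,)\circ x_{j_q}$. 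This gives the second case.

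For part \textbf{c} the key observation is that in any interleaving of $u=x_{i_1}\cdots x_{i_p}$ and $v=x_{j_1}\cdots x_{j_q}$ the final position must be occupied either by $x_{i_p}$ or by $x_{j_q}$, because the internal orders of $u$ and $v$ are preserved. This partitions the shuffle set into two classes; deleting the last slot gives bijections of the first class with the shuffles of $x_{i_1}\cdots x_{i_{p-1}}$ with $v$, and of the second class with the shuffles of $u$ with $x_{j_1}\cdots x_{j_{q-1}}$. Summing and reattaching the deleted letter via part \textbf{b} yields the uniform recursion $u\sh v=(x_{i_1}\cdots x_{i_{p-1}}\sh v)\circ x_{i_p}+(u\sh x_{j_1}\cdots x_{j_{q-1}})\circ x_{j_q}$. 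The three displayed cases are then just this recursion with the empty-word degenerations written out: for $p=1$ the first factor is empty, its shuffle with $v$ equals $v$, and the first term becomes $v\circ x_{i_1}=x_{j_1}\cdots x_{j_q}x_{i_1}$, while for $p=q=1$ both terms are single words.

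Finally, for part \textbf{a}, commutativity is immediate: the involution of $Sh_{p,q}$ swapping the two ordered blocks is a bijection onto $Sh_{q,p}$ under which an interleaving of $u$ and $v$ corresponds to the same word viewed as an interleaving of $v$ and $u$, so $u\sh v=v\sh u$. For associativity I would introduce the triple shuffle $Sh_{p,q,r}$ of sequences interleaving three words while preserving each internal order, and verify that both $(a\sh b)\sh c$ and $a\sh(b\sh c)$ equal the sum of the words indexed by $Sh_{p,q,r}$; this rests on the elementary bijections $Sh_{p+q,r}\times Sh_{p,q}\cong Sh_{p,q,r}\cong Sh_{p,q+r}\times Sh_{q,r}$ obtained by merging two blocks and then merging with the third. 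I expect this last bijective bookkeeping, together with the careful handling of the empty-word base cases in \textbf{c}, to be the only genuinely fiddly point; everything else reduces mechanically to Proposition~\ref{Loday's prop} and the definition of $\sh$.
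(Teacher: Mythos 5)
Your proposal is correct and follows exactly the route the paper intends: the paper's own proof is the single sentence that all three properties ``follow from Proposition~\ref{Loday's prop} and the definition of the shuffle product,'' and your argument is precisely a detailed elaboration of that (part \textbf{b} by truncating the last letter in Loday's formula, part \textbf{c} by the last-letter partition of interleavings, part \textbf{a} by the standard bijections with triple shuffles). Nothing in your write-up deviates from or goes beyond the ingredients the paper cites; you have simply supplied the combinatorial details it omits.
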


For example, 
\begin{center}
$(ab)\circ (cd)=(abc+acb+cab)  d=(ab\sh c)\circ d,$
$(ab)\sh (cd)=abcd+acbd+cabd+acdb+cadb+cdab=$
$(abc+acb+cab)d+(acd+cad+cda)b=(ab\sh c)d+(a\sh cd)b.$
\end{center}
\begin{proof}
All these properties follow from Proposition \ref{Loday's prop} and the definition of the shuffle product.
\end{proof}

\subsection{Products of skew-right-commutative elements} 

\begin{lemma}\label{Skew to Skew} Zinbiel product of skew-right-commutative elements can be presented as follows
$$\overline{\strut x_{i_1}\cdots x_{i_m}}\circ\overline{\strut x_{j_1}x_{j_2}}=\overline{\strut x_{i_1}\cdots x_{i_m}x_{j_1}x_{j_2}}-\overline{\strut x_{i_1}\cdots x_{i_{m-2}}x_{i_m}x_{i_{m-1}}x_{j_1}x_{j_2}}+$$$$(x_{i_1}\cdots x_{i_{m-1}}\sh x_{j_1})x_{i_m}x_{j_2}-(x_{i_1}\cdots x_{i_{m-2}}x_{i_{m}}\sh x_{j_1})x_{i_{m-1}}x_{j_2}-$$$$(x_{i_1}\cdots x_{i_{m-1}}\sh x_{j_2})x_{i_m}x_{j_1}+(x_{i_1}\cdots x_{i_{m-2}}x_{i_m}\sh x_{j_2})x_{i_{m-1}}x_{j_1}$$
and $$\overline{\strut x_{i_1}\cdots x_{i_m}}\circ\overline{\strut x_{j_1}\cdots x_{j_n}}=$$
$$\overline{\strut(x_{i_1}\cdots x_{i_m}\sh x_{j_1}\cdots x_{j_{n-2}})x_{j_{n-1}}x_{j_n}}-\overline{\strut(x_{i_1}\cdots x_{i_{m-2}} x_{i_m}x_{i_{m-1}}\sh x_{j_1}\cdots x_{j_{n-2}})x_{j_{n-1}}x_{j_n}}+$$
$$(x_{i_1}\cdots x_{i_{m-1}}\sh x_{j_1}\cdots x_{j_{n-1}})x_{i_m}x_{j_n}-(x_{i_1}\cdots x_{i_{m-2}} x_{i_m}\sh x_{j_{1}}\cdots x_{j_{n-1}})x_{i_{m-1}}x_{j_n}-$$
$$(x_{i_1}\cdots x_{i_{m-1}}\sh x_{j_1}\cdots x_{j_{n-2}}x_{j_n})x_{i_m}x_{j_{n-1}}+(x_{i_1}\cdots x_{i_{m-2}}x_{i_m} \sh x_{j_1}\cdots x_{j_{n-2}}x_{j_n})x_{i_{m-1}}x_{j_{n-1}},$$ where $m\geq2,n\geq3.$
\end{lemma}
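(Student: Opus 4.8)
The plan is to reduce both displayed identities to a single formal computation by peeling the last letters off each factor and applying Proposition~\ref{shuffle product} repeatedly; the only property of $p$ that I use is that it transposes the final two letters of a word. First I abbreviate $A=x_{i_1}\cdots x_{i_{m-2}}$ and $B=x_{j_1}\cdots x_{j_{n-2}}$ (with $B$ empty when $n=2$) and write juxtaposition for the left-normed Zinbiel word. By the definition of $p$ the two skew factors become differences of base words,
\[
\overline{x_{i_1}\cdots x_{i_m}}=Ax_{i_{m-1}}x_{i_m}-Ax_{i_m}x_{i_{m-1}},\qquad
\overline{x_{j_1}\cdots x_{j_n}}=Bx_{j_{n-1}}x_{j_n}-Bx_{j_n}x_{j_{n-1}}.
\]

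Next I compute the outer Zinbiel product. Applying Proposition~\ref{shuffle product}(b) to each of the two base words making up the right factor --- peeling off the final letters $x_{j_n}$ and $x_{j_{n-1}}$ --- and using bilinearity of $\circ$ and $\sh$, I obtain
\[
\overline{x_{i_1}\cdots x_{i_m}}\circ\overline{x_{j_1}\cdots x_{j_n}}
=\bigl(\overline{x_{i_1}\cdots x_{i_m}}\sh x_{j_1}\cdots x_{j_{n-1}}\bigr)x_{j_n}
-\bigl(\overline{x_{i_1}\cdots x_{i_m}}\sh x_{j_1}\cdots x_{j_{n-2}}x_{j_n}\bigr)x_{j_{n-1}},
\]
where a trailing letter abbreviates $\circ$ with a single generator. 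Substituting the expansion of the left factor turns this into four ``shuffle-and-append'' terms.

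The decisive step is then to peel the last letter off the \emph{left} argument of each of these four shuffles by the $p,q>1$ case of Proposition~\ref{shuffle product}(c), which splits every term into two. After reordering, four of the eight resulting summands are $(Ax_{i_{m-1}}\sh Bx_{j_{n-1}})x_{i_m}x_{j_n}$ together with its three analogues obtained by swapping $x_{i_{m-1}}\leftrightarrow x_{i_m}$ and $x_{j_{n-1}}\leftrightarrow x_{j_n}$; these are exactly the last four summands of the claimed right-hand side. The remaining four share the shuffle factor $Ax_{i_{m-1}}x_{i_m}\sh B$ or $Ax_{i_m}x_{i_{m-1}}\sh B$ and differ only in the order of the trailing pair $x_{j_{n-1}}x_{j_n}$; since $p$ transposes that pair, the definition $\overline{a}=a-p(a)$ lets them recombine as
\[
\overline{(Ax_{i_{m-1}}x_{i_m}\sh B)x_{j_{n-1}}x_{j_n}}-\overline{(Ax_{i_m}x_{i_{m-1}}\sh B)x_{j_{n-1}}x_{j_n}},
\]
which are the first two summands. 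This proves the second identity for $m\ge2$, $n\ge3$.

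Finally, the first identity is the degenerate case $n=2$, where $B$ is empty, so the right argument of each inner shuffle is a single letter; there one peels the left argument using commutativity of $\sh$ and the $p=1,q>1$ case of Proposition~\ref{shuffle product}(c), the empty shuffle acting as the identity, and the same recombination applies. The only real difficulty is the bookkeeping: one must track which generator is appended at each stage and check that the four ``$\sh B$'' terms reassemble under the bar operation with the correct signs. I expect this sign-and-order accounting, rather than any conceptual point, to be the main obstacle.
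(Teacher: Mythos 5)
Your proposal is correct and follows essentially the same route as the paper's own proof: expand the skew-rcom factors into differences of base words, convert the Zinbiel products to shuffle-and-append form via part \textbf{b} of Proposition \ref{shuffle product}, split each resulting shuffle with part \textbf{c}, and recombine four of the eight summands into barred terms using the definition of $\overline{a}$. The only cosmetic difference is that the paper treats the $n=2$ case by a separate (but structurally identical) computation, whereas you obtain it as the degenerate case of empty $B$ via commutativity of $\sh$ and the $p=1,q>1$ case of part \textbf{c}, which is equally valid.
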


\begin{proof}
$$\overline{\strut x_{i_1}\cdots x_{i_m}}\circ\overline{\strut x_{j_1}x_{j_2}}=x_{i_1}\cdots x_{i_m}\circ x_{j_1}x_{j_2}-x_{i_1}\cdots x_{i_{m-2}}x_{i_m}x_{i_{m-1}}\circ x_{j_1}x_{j_2}-$$$$x_{i_1}\cdots x_{i_m}\circ x_{j_2}x_{j_1}+x_{i_1}\cdots x_{i_{m-2}}x_{i_m}x_{i_{m-1}}\circ x_{j_2}x_{j_1}=$$
(by part {\bf b} of Proposition \ref{shuffle product})	
$$(x_{i_1}\cdots x_{i_m}\sh x_{j_1})x_{j_2}-(x_{i_1}\cdots x_{i_{m-2}}x_{i_m}x_{i_{m-1}}\sh x_{j_1})x_{j_2}-$$
$$(x_{i_1}\cdots x_{i_m}\sh x_{j_2})x_{j_1}+(x_{i_1}\cdots x_{i_{m-2}}x_{i_m}x_{i_{m-1}}\sh x_{j_2})x_{j_1}=$$
(by the definitions of shuffle product and skew-rcom elements)
$$\overline{\strut x_{i_1}\cdots x_{i_m}x_{j_1}x_{j_2}}-\overline{\strut x_{i_1}\cdots x_{i_{m-2}}x_{i_m}x_{i_{m-1}}x_{j_1}x_{j_2}}+$$$$(x_{i_1}\cdots x_{i_{m-1}}\sh x_{j_1})x_{i_m}x_{j_2}-(x_{i_1}\cdots x_{i_{m-2}}x_{i_m}\sh x_{j_1})x_{i_{m-1}}x_{j_2}-$$$$(x_{i_1}\cdots x_{i_{m-1}}\sh x_{j_2})x_{i_m}x_{j_1}+(x_{i_1}\cdots x_{i_{m-2}}x_{i_m}\sh x_{j_2})x_{i_{m-1}}x_{j_1}.$$
Let $n\geq3.$
$$\overline{\strut x_{i_1}\cdots x_{i_m}}\circ\overline{\strut x_{j_1}\cdots x_{j_n}}=$$
$$ x_{i_1}\cdots x_{i_m}\circ x_{j_1}\cdots x_{j_n} -x_{i_1}\cdots x_{i_{m-2}}x_{i_m} x_{i_{m-1}}\circ x_{j_1}\cdots x_{j_n}-$$
$$x_{i_1}\cdots x_{i_m}\circ x_{j_1}\cdots x_{j_{n-2}}x_{j_n}x_{j_{n-1}}+x_{i_1}\cdots x_{i_{m-2}} x_{i_m} x_{i_{m-1}}\circ x_{j_1}\cdots x_{j_{n-2}} x_{j_n}x_{j_{n-1}} =$$
(by part {\bf b} of Proposition \ref{shuffle product})	
$$(x_{i_1}\cdots x_{i_m}\sh x_{j_1}\cdots x_{j_{n-1}})x_{j_n}-(x_{i_1}\cdots x_{i_{m-2}}x_{i_m} x_{i_{m-1}}\sh x_{j_1}\cdots x_{j_{n-1}})x_{j_n}-$$
$$(x_{i_1}\cdots x_{i_m}\sh x_{j_1}\cdots x_{j_{n-2}}x_{j_n})x_{j_{n-1}}+(x_{i_1}\cdots x_{i_{m-2}}x_{i_m} x_{i_{m-1}}\sh x_{j_1}\cdots x_{j_{n-2}}x_{j_n})x_{j_{n-1}} =$$
(by part {\bf c} of Proposition \ref{shuffle product})	
$$(x_{i_1}\cdots x_{i_{m-1}}\sh x_{j_1}\cdots x_{j_{n-1}})x_{i_m}x_{j_n}+(x_{i_1}\cdots x_{i_m}\sh x_{j_1}\cdots x_{j_{n-2}})x_{j_{n-1}}x_{j_n}-$$
$$(x_{i_1}\cdots x_{i_{m-2}} x_{i_m}\sh x_{j_1}\cdots x_{j_{n-1}})x_{i_{m-1}}x_{j_n}-(x_{i_1}\cdots x_{i_{m-2}} x_{i_m}x_{i_{m-1}}\sh x_{j_1}\cdots x_{j_{n-2}})x_{j_{n-1}}x_{j_n}-$$
$$(x_{i_1}\cdots x_{i_{m-1}}\sh x_{j_1}\cdots x_{j_{n-2}}x_{j_n})x_{i_m}x_{j_{n-1}}-(x_{i_1}\cdots x_{i_m}\sh x_{j_1}\cdots x_{j_{n-2}})x_{j_n}x_{j_{n-1}}+$$
$$(x_{i_1}\cdots x_{i_{m-2}}x_{i_m} \sh x_{j_1}\cdots x_{j_{n-2}}x_{j_n})x_{i_{m-1}}x_{j_{n-1}}+(x_{i_1}\cdots x_{i_{m-2}}x_{i_m} x_{i_{m-1}}\sh x_{j_1}\cdots x_{j_{n-2}})x_{j_n}x_{j_{n-1}}=$$
$$\overline{\strut(x_{i_1}\cdots x_{i_m}\sh x_{j_1}\cdots x_{j_{n-2}})x_{j_{n-1}}x_{j_n}}-\overline{\strut(x_{i_1}\cdots x_{i_{m-2}} x_{i_m}x_{i_{m-1}}\sh x_{j_1}\cdots x_{j_{n-2}})x_{j_{n-1}}x_{j_n}}+$$
$$(x_{i_1}\cdots x_{i_{m-1}}\sh x_{j_1}\cdots x_{j_{n-1}})x_{i_m}x_{j_n}-(x_{i_1}\cdots x_{i_{m-2}} x_{i_m}\sh x_{j_1}\cdots x_{j_{n-1}})x_{i_{m-1}}x_{j_n}-$$
$$(x_{i_1}\cdots x_{i_{m-1}}\sh x_{j_1}\cdots x_{j_{n-2}}x_{j_n})x_{i_m}x_{j_{n-1}}+(x_{i_1}\cdots x_{i_{m-2}}x_{i_m} \sh x_{j_1}\cdots x_{j_{n-2}}x_{j_n})x_{i_{m-1}}x_{j_{n-1}}.$$
\end{proof}
	
\begin{lemma}\label{SkewRcomprudctgenerator}
$$[\overline{\strut x_{i_{1}}\cdots x_{i_{m-1}}x_{i_{m}}},x_{j_{1}}]=$$
\begin{center}
$\begin{cases}  	
\overline{\strut x_{i_1}x_{i_2}x_{j_1}}-\overline{\strut x_{i_2}x_{i_1}x_{j_1}}-\overline{\strut x_{j_1}x_{i_1}x_{i_2}}\quad \mbox{for} \quad m=2, \\ \overline{\strut x_{i_{1}}\cdots x_{i_{m}} x_{j_1}}-\overline{\strut x_{i_{1}}\cdots x_{i_{m-2}}x_{i_{m}}x_{i_{m-1}} x_{j_1}}-\overline{\strut (x_{j_1}\sh x_{i_{1}}\cdots x_{i_{m-2}})x_{i_{m-1}}x_{i_m}} \quad \mbox{for} \quad m>2.\end{cases}$
\end{center}
\end{lemma}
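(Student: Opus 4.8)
The plan is to evaluate the Lie commutator straight from its definition,
$$[\overline{u},x_{j_1}]=\overline{u}\circ x_{j_1}-x_{j_1}\circ\overline{u},\qquad u=x_{i_1}\cdots x_{i_m},$$
where $\overline{u}=u-p(u)=x_{i_1}\cdots x_{i_m}-x_{i_1}\cdots x_{i_{m-2}}x_{i_m}x_{i_{m-1}}$, and then to reorganize the resulting sum of base monomials into skew-rcom elements. The first summand is immediate, since right multiplication by a single generator merely appends it:
$$\overline{u}\circ x_{j_1}=x_{i_1}\cdots x_{i_m}x_{j_1}-x_{i_1}\cdots x_{i_{m-2}}x_{i_m}x_{i_{m-1}}x_{j_1}.$$

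For the second summand I would expand $x_{j_1}\circ u$ and $x_{j_1}\circ p(u)$ by part \textbf{b} of Proposition \ref{shuffle product}; as $u$ and $p(u)$ both have length $m\ge 2$, this gives $x_{j_1}\circ u=(x_{j_1}\sh x_{i_1}\cdots x_{i_{m-1}})x_{i_m}$ and $x_{j_1}\circ p(u)=(x_{j_1}\sh x_{i_1}\cdots x_{i_{m-2}}x_{i_m})x_{i_{m-1}}$. The one genuine idea is to split each single-letter shuffle according to whether $x_{j_1}$ occupies the final slot, which by part \textbf{c} of Proposition \ref{shuffle product} reads
$$x_{j_1}\sh x_{i_1}\cdots x_{i_{m-1}}=(x_{j_1}\sh x_{i_1}\cdots x_{i_{m-2}})x_{i_{m-1}}+x_{i_1}\cdots x_{i_{m-1}}x_{j_1},$$
and likewise with $x_{i_m}$ in place of $x_{i_{m-1}}$. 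Substituting these, the two \emph{interior} contributions differ exactly by transposing the last two letters $x_{i_{m-1}},x_{i_m}$ and hence assemble into a single skew-rcom element, leaving
$$x_{j_1}\circ\overline{u}=\overline{(x_{j_1}\sh x_{i_1}\cdots x_{i_{m-2}})x_{i_{m-1}}x_{i_m}}+x_{i_1}\cdots x_{i_{m-1}}x_{j_1}x_{i_m}-x_{i_1}\cdots x_{i_{m-2}}x_{i_m}x_{j_1}x_{i_{m-1}}.$$

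Subtracting this from $\overline{u}\circ x_{j_1}$ and pairing each surviving monomial with its last-two-letter transpose then produces the three terms of the statement: $x_{i_1}\cdots x_{i_m}x_{j_1}$ together with $-x_{i_1}\cdots x_{i_{m-1}}x_{j_1}x_{i_m}$ give $\overline{x_{i_1}\cdots x_{i_m}x_{j_1}}$; the monomials $-x_{i_1}\cdots x_{i_{m-2}}x_{i_m}x_{i_{m-1}}x_{j_1}$ and $x_{i_1}\cdots x_{i_{m-2}}x_{i_m}x_{j_1}x_{i_{m-1}}$ give $-\overline{x_{i_1}\cdots x_{i_{m-2}}x_{i_m}x_{i_{m-1}}x_{j_1}}$; and the interior skew-rcom term is carried over with a minus sign. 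The case $m=2$ is the degenerate instance in which the prefix $x_{i_1}\cdots x_{i_{m-2}}$ is empty and $x_{j_1}\sh x_{i_1}\cdots x_{i_{m-2}}$ collapses to the single letter $x_{j_1}$, so that part \textbf{c} is used in its $p=q=1$ form; it is listed separately only because of this notational degeneracy, and the general computation specializes to it verbatim. No step is deep here: the only real obstacle is keeping the shuffle bookkeeping straight so that the interior terms are correctly recognized as already antisymmetric in the final pair of letters.
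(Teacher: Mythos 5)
Your proposal is correct and follows essentially the same route as the paper's proof: expand the commutator by definition, apply parts \textbf{b} and \textbf{c} of Proposition \ref{shuffle product} to rewrite $x_{j_1}\circ\overline{\strut x_{i_1}\cdots x_{i_m}}$, and pair the resulting monomials into skew-rcom elements. The only (harmless) difference is that the paper disposes of the $m=2$ case by direct calculation, whereas you observe it is the degenerate instance of the general computation with an empty prefix.
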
	
\begin{proof} 
The first part of the statement can be obtained by direct calculations. Suppose $m>2.$ Then
$$[\overline{\strut x_{i_{1}}\cdots x_{i_{m}}},x_{j_1}]=\overline{\strut x_{i_{1}}\cdots x_{i_{m}}}\circ x_{j_1}-x_{j_1}\circ\overline{\strut x_{i_{1}}\cdots x_{i_m}}=$$
$$(x_{i_{1}}\cdots x_{i_m})\circ x_{j_1}-(x_{i_{1}}\cdots x_{i_{m-2}}x_{i_m}x_{i_{m-1}})\circ x_{j_1}-$$$$x_{j_1}\circ(x_{i_{1}}\cdots x_{i_m})+x_{j_1}\circ(x_{i_{1}}\cdots x_{i_{m-2}}x_{i_m}x_{i_{m-1}})=$$
(by part {\bf b} of Proposition \ref{shuffle product})	
$$x_{i_{1}}\cdots x_{i_m}x_{j_1}-x_{i_{1}}\cdots x_{i_{m-2}}x_{i_m}x_{i_{m-1}} x_{j_1}-$$
$$(x_{j_1}\sh x_{i_{1}}\cdots x_{i_{m-1}})x_{i_m}+(x_{j_1}\sh x_{i_{1}}\cdots x_{i_{m-2}}x_{i_m})x_{i_{m-1}}=$$
(by part {\bf c} of Proposition \ref{shuffle product})	
$$x_{i_{1}}\cdots x_{i_m} x_{j_1}-x_{i_{1}}\cdots x_{i_{m-2}}x_{i_m}x_{i_{m-1}} x_{j_1}-$$$$x_{i_{1}}\cdots x_{i_{m-1}}x_{j_1}x_{i_m}-(x_{j_1}\sh x_{i_{1}}\cdots x_{i_{m-2}})x_{i_{m-1}}x_{i_m}+$$$$x_{i_{1}}\cdots x_{i_{m-2}}x_{i_m}x_{j_1}x_{i_{m-1}}+(x_{j_1}\sh x_{i_{1}}\cdots x_{i_{m-2}})x_{i_m}x_{i_{m-1}}=$$
$$\overline{\strut x_{i_1}\cdots x_{i_m} x_{j_1}}-\overline{\strut x_{i_1}\cdots x_{i_{m-2}}x_{i_m}x_{i_{m-1}} x_{j_1}}-\overline{\strut(x_{j_1}\sh x_{i_{1}}\cdots x_{i_{m-2}})x_{i_{m-1}}x_{i_m}}.$$
\end{proof}

\begin{lemma}\label{commutator of Skews}	
Commutator of skew-right-commutative elements is a linear combination of skew-right-commutative elements.
\end{lemma}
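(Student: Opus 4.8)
By bilinearity of the commutator it suffices to treat two basis skew-rcom elements $\overline{x_\alpha}$ and $\overline{x_\beta}$, and the plan is to compute
$$[\overline{x_\alpha},\overline{x_\beta}]=\overline{x_\alpha}\circ\overline{x_\beta}-\overline{x_\beta}\circ\overline{x_\alpha}$$
by feeding each Zinbiel product through Lemma \ref{Skew to Skew}. Writing $\alpha=i_1\cdots i_m$ and $\beta=j_1\cdots j_n$ with $m,n\ge 2$, I would apply the first formula of that lemma when the second factor has degree $2$ and the second formula when it has degree $\ge 3$; in either case the output has the uniform shape ``(two barred summands) $+\,\Phi(\alpha,\beta)$'', where the barred summands are skew-rcom by definition and the genuinely problematic part is the collection of non-barred shuffle terms
$$\Phi(\alpha,\beta)=(A\sh B)x_{i_m}x_{j_n}-(\hat A\sh B)x_{i_{m-1}}x_{j_n}-(A\sh\hat B)x_{i_m}x_{j_{n-1}}+(\hat A\sh\hat B)x_{i_{m-1}}x_{j_{n-1}},$$
with $A=x_{i_1}\cdots x_{i_{m-1}}$, $\hat A=x_{i_1}\cdots x_{i_{m-2}}x_{i_m}$, $B=x_{j_1}\cdots x_{j_{n-1}}$, $\hat B=x_{j_1}\cdots x_{j_{n-2}}x_{j_n}$.

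Since the barred summands of the two products are already linear combinations of skew-rcom elements, the whole matter reduces to showing that $\Phi(\alpha,\beta)-\Phi(\beta,\alpha)$ is skew-rcom. Here is where I expect the single decisive observation to lie. When I write $\Phi(\beta,\alpha)$ from Lemma \ref{Skew to Skew} with the two factors interchanged, the shuffle factors appearing are $B\sh A$, $\hat B\sh A$, $B\sh\hat A$, $\hat B\sh\hat A$; by commutativity of the shuffle product (part \textbf{a} of Proposition \ref{shuffle product}) these are exactly $A\sh B$, $A\sh\hat B$, $\hat A\sh B$, $\hat A\sh\hat B$, so the four shuffle coefficients in $\Phi(\alpha,\beta)$ and in $\Phi(\beta,\alpha)$ match up pairwise, and the two orderings differ only in the order of the final two appended letters.

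Because $p$ acts on a basis word by transposing its last two letters, each matched pair collapses into a bar: extending the bar by linearity one has $(A\sh B)x_{i_m}x_{j_n}-(A\sh B)x_{j_n}x_{i_m}=\overline{(A\sh B)x_{i_m}x_{j_n}}$, and similarly for the other three coefficients. Term by term this yields
$$\Phi(\alpha,\beta)-\Phi(\beta,\alpha)=\overline{(A\sh B)x_{i_m}x_{j_n}}-\overline{(\hat A\sh B)x_{i_{m-1}}x_{j_n}}-\overline{(A\sh\hat B)x_{i_m}x_{j_{n-1}}}+\overline{(\hat A\sh\hat B)x_{i_{m-1}}x_{j_{n-1}}},$$
a linear combination of skew-rcom elements; adding back the barred summands of the two products proves the claim. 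The main obstacle is not any one computation but recognizing this cancellation: individually the shuffle terms are not skew-rcom, and only after antisymmetrizing over the interchange $\alpha\leftrightarrow\beta$—made possible precisely by commutativity of $\sh$—do they assemble into bars. A secondary, purely bookkeeping difficulty is checking that the uniform shape above persists in the degenerate small cases $m=2$ or $n=2$, where some of the prefixes in $A,\hat A,B,\hat B$ shrink to the empty word; the anticommutativity $[\overline{x_\beta},\overline{x_\alpha}]=-[\overline{x_\alpha},\overline{x_\beta}]$ lets me normalize $m\le n$ and keep this case analysis short.
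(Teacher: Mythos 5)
Your proposal is correct and follows essentially the same route as the paper: expand both Zinbiel products via Lemma \ref{Skew to Skew}, note the barred summands are already skew-rcom, and use commutativity of the shuffle product (part \textbf{a} of Proposition \ref{shuffle product}) to pair each non-barred shuffle term of $\overline{x_\alpha}\circ\overline{x_\beta}$ with its mirror in $\overline{x_\beta}\circ\overline{x_\alpha}$, the two differing only by a transposition of the last two letters and hence assembling into bars. The paper handles the degree-$(2,2)$ case by an explicit formula and remarks that the mixed case with one factor of degree $2$ is similar, which matches your bookkeeping remark about degenerate prefixes.
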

\begin{proof} If skew-right-commutative elements have degree 2, then  
a straightforward calculation shows that $$[\overline{\strut x_{i_1}x_{i_2}},\overline{\strut x_{j_1}x_{j_2}}]=\overline{\strut \overline{\strut x_{i_1}x_{i_2}}x_{j_1}x_{j_2}}-\overline{\strut \overline{\strut x_{j_1}x_{j_2}}x_{i_1}x_{i_2}}+$$$$\overline{\strut (x_{i_1}\sh x_{j_1})x_{i_2}x_{j_2}}-\overline{\strut (x_{i_1}\sh x_{j_2})x_{i_2}x_{j_1}}+\overline{\strut (x_{i_2}\sh x_{j_2})x_{i_1}x_{j_1}}-\overline{\strut (x_{i_2}\sh x_{j_1})x_{i_1}x_{j_2}}.$$

Below we present a proof of the statement when skew-right-commutative elements have degrees at least 3. The case when one of skew-right-commutative element  has degree 2 can be proved by a similar way. We have
$$ [\overline{\strut x_{i_1}\cdots x_{i_m}},\overline{\strut x_{j_1}\cdots x_{j_n}}]=$$
$$\overline{\strut x_{i_1}\cdots x_{i_m}}\circ \overline{\strut x_{j_1}\cdots x_{j_n}}-\overline{\strut x_{j_1}\cdots x_{j_n}}\circ\overline{\strut x_{i_1}\cdots x_{i_m}}=$$
(by Lemma \ref{Skew to Skew})
$$\overline{\strut(x_{i_1}\cdots x_{i_m}\sh x_{j_1}\cdots x_{j_{n-2}})x_{j_{n-1}}x_{j_n}}-\overline{(x_{i_1}\cdots x_{i_{m-2}} x_{i_m}x_{i_{m-1}}\sh x_{j_1}\cdots x_{j_{n-2}})x_{j_{n-1}}x_{j_n}}+$$
$$(x_{i_1}\cdots x_{i_{m-1}}\sh x_{j_1}\cdots x_{j_{n-1}})x_{i_m}x_{j_n}-(x_{i_1}\cdots x_{i_{m-2}} x_{i_m}\sh x_{j_1}\cdots x_{j_{n-1}})x_{i_{m-1}}x_{j_n}-$$
$$(x_{i_1}\cdots x_{i_{m-1}}\sh x_{j_1}\cdots x_{j_{n-2}}x_{j_n})x_{i_m}x_{j_{n-1}}+(x_{i_1}\cdots x_{i_{m-2}}x_{i_m} \sh x_{j_1}\cdots x_{j_{n-2}}x_{j_n})x_{i_{m-1}}x_{j_{n-1}}-$$

$$\overline{\strut(x_{j_1}\cdots x_{j_n}\sh x_{i_1}\cdots x_{i_{m-2}})x_{i_{m-1}}x_{i_m}}+\overline{\strut(x_{j_1}\cdots x_{j_{n-2}} x_{j_n}x_{j_{n-1}}\sh x_{i_1}\cdots x_{i_{m-2}})x_{i_{m-1}}x_{i_m}}-$$
$$(x_{j_1}\cdots x_{j_{n-1}}\sh x_{i_1}\cdots x_{i_{m-1}})x_{j_n}x_{i_m}+(x_{j_1}\cdots x_{j_{n-2}} x_{j_n}\sh x_{i_1}\cdots x_{i_{m-1}})x_{j_{n-1}}x_{i_m}+$$
$$(x_{j_1}\cdots x_{j_{n-1}}\sh x_{i_1}\cdots x_{i_{m-2}}x_{i_m})x_{j_n}x_{i_{m-1}}-(x_{j_1}\cdots x_{j_{n-2}}x_{j_n} \sh x_{i_1}\cdots x_{i_{m-2}}x_{i_m})x_{j_{n-1}}x_{i_{m-1}}=$$
(by part {\bf a} of Proposition \ref{shuffle product})
$$\overline{\strut(x_{i_1}\cdots x_{i_m}\sh x_{j_1}\cdots x_{j_{n-2}})x_{j_{n-1}}x_{j_n}}-\overline{\strut(x_{i_1}\cdots x_{i_{m-2}} x_{i_m}x_{i_{m-1}}\sh x_{j_1}\cdots x_{j_{n-2}})x_{j_{n-1}}x_{j_n}}-$$
$$\overline{\strut(x_{j_1}\cdots x_{j_n}\sh x_{i_1}\cdots x_{i_{m-2}})x_{i_{m-1}}x_{i_m}}+\overline{(x_{j_1}\cdots x_{j_{n-2}} x_{j_n}x_{j_{n-1}}\sh x_{i_1}\cdots x_{i_{m-2}})x_{i_{m-1}}x_{i_m}}+$$
$$\overline{\strut(x_{i_1}\cdots x_{i_{m-1}}\sh x_{j_1}\cdots x_{j_{n-1}})x_{i_m}x_{j_n}}-\overline{\strut(x_{i_1}\cdots x_{i_{m-2}} x_{i_m}\sh x_{j_1}\cdots x_{j_{n-1}})x_{i_{m-1}}x_{j_n}}-$$
$$\overline{\strut(x_{i_1}\cdots x_{i_{m-1}}\sh x_{j_1}\cdots x_{j_{n-2}}x_{j_n})x_{i_m}x_{j_{n-1}}}+\overline{\strut(x_{i_1}\cdots x_{i_{m-2}}x_{i_m} \sh x_{j_1}\cdots x_{j_{n-2}}x_{j_n})x_{i_{m-1}}x_{j_{n-1}}}.$$ \end{proof}

\begin{lemma}\label{Lie is skew-rcom}
If $f\in ST(X),$ then $p(f)=-f.$
\end{lemma}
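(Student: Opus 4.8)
The plan is to deduce the statement from a stronger spanning assertion. Since $p$ is a linear map with $p(x_i)=-x_i$ for every generator and $p(\overline{x_\alpha})=-\overline{x_\alpha}$ for every skew-rcom element (as recorded right after the definition of $\overline{a}$), it suffices to prove that $ST(X)$ is spanned, as a vector space, by the generators $x_i$ together with the skew-rcom elements $\overline{x_\alpha}$, $\alpha\in\Gamma$. Indeed, once this is known, writing any $f\in ST(X)$ as $f=\sum_i c_i x_i+\sum_\alpha d_\alpha\overline{x_\alpha}$ and applying $p$ termwise gives $p(f)=-f$ immediately.

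First I would record the base case: for generators, $[x_i,x_j]=x_ix_j-x_jx_i=\overline{x_ix_j}$, which (up to sign and a reordering of $i,j$) is a skew-rcom element, so the whole degree-$2$ component of $ST(X)$ lies in the desired span. For the inductive step I would argue by induction on the degree $n$. Since $ST(X)$ is by definition the subalgebra of $Zin(X)^{(-)}$ generated by $X$, it is spanned by bracket monomials, and any such monomial of degree $n$ has the form $[a,b]$ with $a,b$ bracket monomials of degrees strictly less than $n$. By the induction hypothesis $a$ and $b$ are linear combinations of generators and skew-rcom elements, so by bilinearity of the commutator $[a,b]$ decomposes into brackets of three types: a bracket of two generators (the base case), a bracket of a skew-rcom element with a generator, and a bracket of two skew-rcom elements. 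The first type is handled above; the second is a linear combination of skew-rcom elements by Lemma~\ref{SkewRcomprudctgenerator}; and the third is a linear combination of skew-rcom elements by Lemma~\ref{commutator of Skews} (which itself rests on Lemma~\ref{Skew to Skew}). Hence $[a,b]$ lies in the span of generators and skew-rcom elements, closing the induction.

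The point that requires care is that $ST(X)$ is only a Tortkara algebra, not a Lie algebra, so one may not invoke the Jacobi identity to rewrite an arbitrary bracketing as a left-normed one; the induction must therefore run over arbitrary bracket monomials $[a,b]$, reducing via bilinearity to the three cases above rather than to a single normal form. The genuine computational content, however, is exactly what Lemmas~\ref{SkewRcomprudctgenerator} and~\ref{commutator of Skews} already supply, so once this reduction is in place the argument is routine. With the spanning claim established, linearity of $p$ yields $p(f)=-f$ for every $f\in ST(X)$, as required.
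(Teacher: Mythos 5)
Your proof is correct and follows essentially the same route as the paper: the paper's (very terse) proof likewise observes that $ST(X)$ is generated by the brackets $[x,y]=\overline{xy}$ and then cites Lemmas~\ref{SkewRcomprudctgenerator} and~\ref{commutator of Skews} to close the induction on bracket monomials, exactly as you do. Your write-up merely makes explicit the spanning claim, the strong induction over arbitrary (not left-normed) bracketings, and the final appeal to linearity of $p$.
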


\begin{proof}
Since $ST(X)$ is generated by the commutator products on $X$ and $[x,y]=\overline{xy}$ for any $x,y\in X,$  Lemmas \ref{SkewRcomprudctgenerator} and \ref{commutator of Skews} complete the proof. \end{proof}

Now we prove that any skew-right-commutative element  of $Zin(X)$ is Lie. 

\begin{lemma}\label{SkewRcom is lie} Let $f\in Zin(X)$ with $p(f)=-f.$ Then $f\in ST(X).$
\end{lemma}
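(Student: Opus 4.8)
The plan is to establish the converse of Lemma \ref{Lie is skew-rcom}: the $(-1)$-eigenspace of $p$ is contained in $ST(X)$. Since $p^2=\mathrm{id}$, the space $Zin(X)$ decomposes into the $\pm 1$ eigenspaces of $p$, and on each homogeneous component of degree $n\ge 2$ the involution $p$ merely transposes the last two letters of a basis word. Hence in degree $n\ge 2$ the $(-1)$-eigenspace is spanned by the elements $\overline{w}=w-p(w)$, that is, by the skew-rcom elements $\overline{x_\alpha}$ with $\alpha\in\Gamma$ (a word with equal last two letters contributes a fixed point, which lies in the $+1$-eigenspace), while in degree $1$ it is spanned by the generators $x_i=-p(x_i)$. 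Thus an element $f$ with $p(f)=-f$ is a linear combination of skew-rcom elements and generators, and it suffices to prove that every skew-rcom element lies in $ST(X)$.

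I would argue this by induction on the degree $n$. The low-degree cases are immediate: $x_i\in X\subseteq ST(X)$ in degree $1$, and $\overline{x_ix_j}=[x_i,x_j]\in ST(X)$ in degree $2$. For the inductive step, fix $n\ge 3$ and assume that $\overline{w}\in ST(X)$ for every word $w$ of degree less than $n$. In particular $\overline{w'}\in ST(X)$ for every word $w'$ of degree $n-1$, so for every generator $x_j$ the commutator $[\overline{w'},x_j]$ lies in the degree-$n$ component $ST(X)_n$. Let $V_n$ denote the $(-1)$-eigenspace of $p$ in degree $n$, spanned by the degree-$n$ skew-rcom elements; by the reduction above the goal is $V_n\subseteq ST(X)_n$.

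The engine of the induction is Lemma \ref{SkewRcomprudctgenerator}. Reading its $m=n-1$ case as a formula for its highest term, it expresses the target $\overline{w'x_j}$ as the sum of the \emph{known} Lie element $[\overline{w'},x_j]$ and two correction terms: one obtained by transposing the letters in positions $n-2$ and $n-1$, and one a shuffle of $x_j$ into the first $n-2$ letters followed by $x_{i_{n-2}}x_{i_{n-1}}$, both again lying in $V_n$. Letting $w'$ and $x_j$ range over all choices and rewriting each correction term in normal form via the sign rule $p(\overline{a})=-\overline{a}$ (so as to restore an increasing last pair), I would aim to produce, for every $\alpha\in\Gamma$ of degree $n$, a relation $\overline{x_\alpha}=(\text{element of }ST(X)_n)+(\text{combination of }\overline{x_\beta}\text{ with }\beta\prec\alpha)$ for a suitable well-ordering $\prec$ on the degree-$n$ part of $\Gamma$; an induction on $\prec$ then gives $\overline{x_\alpha}\in ST(X)$, hence $V_n\subseteq ST(X)_n$. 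The case $n=3$, worked out directly from the $m=2$ branch of Lemma \ref{SkewRcomprudctgenerator}, already shows that the three elements $[\overline{x_{i_1}x_{i_2}},x_j]$ span the three-dimensional multilinear component, and this is the prototype of the general triangular solve.

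The main obstacle will be exactly the verification of this triangularity, i.e. choosing the ordering $\prec$ under which the correction words are all strictly below $\overline{w'x_j}$. The difficulty is twofold. First, after restoring normal form the transposition term can \emph{raise} the value of the last letter, so the naive ordering by last letter fails; and second, the shuffle term is a genuine sum of many basis words in which the peeled generator $x_j$ is relocated among the first $n-2$ positions, so its words must be compared one by one. Controlling both phenomena simultaneously is the crux; once a compatible ordering is fixed, the remaining work is routine bookkeeping with the shuffle identities of Proposition \ref{shuffle product} and the antisymmetry $\overline{u\,yz}=-\overline{u\,zy}$ in the last two letters.
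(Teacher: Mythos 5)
Your opening reduction (the eigenspace decomposition of $p$, reducing the problem to showing every skew-rcom element lies in $ST(X)$) and your choice of engine (induction on degree together with Lemma \ref{SkewRcomprudctgenerator}) coincide with the paper's. But the proposal stops exactly where the real work begins: the inductive step is never carried out. The ``triangularity'' you defer to the end --- a well-ordering $\prec$ under which, in the relation
$\overline{x_{i_1}\cdots x_{i_{n-1}}x_j}=[\overline{x_{i_1}\cdots x_{i_{n-1}}},x_j]+\overline{x_{i_1}\cdots x_{i_{n-3}}x_{i_{n-1}}x_{i_{n-2}}x_j}+\overline{(x_j\sh x_{i_1}\cdots x_{i_{n-3}})x_{i_{n-2}}x_{i_{n-1}}}$,
all correction words are $\prec$-smaller than the target --- is not routine bookkeeping. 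Since your induction certifies only the commutators $[\overline{w'},x_j]$ with $w'$ of degree $n-1$ as known Lie elements, the existence of such a solve is equivalent to the assertion that these commutators span the whole degree-$n$ skew-rcom space, i.e.\ that $ST(X)_n=[ST(X)_{n-1},X]$. You verify this only in degree $3$; for general degree and multidegree you offer neither an ordering nor a rank argument, and you yourself observe that the natural orderings fail (the transposition term can raise the last letter, and the shuffle scatters $x_j$ through the prefix). That unproven spanning statement is precisely the content of the lemma, so the proposal is a strategy with a hole at its center rather than a proof.

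It is worth recording how the paper escapes, because it uses two ideas absent from your sketch, and it never needs the left-normed spanning statement on which your plan rests. First, it strengthens the induction hypothesis by substitution: statement (\ref{f4}) in lower degree holds with an arbitrary Lie element $z$ in place of a letter, so taking $z=\overline{x_{i_1}\cdots x_{i_k}}$ shows that transposing adjacent letters among the first $n-2$ positions does not change the class of $\overline{x_{i_1}\cdots x_{i_n}}$ modulo $ST(X)$; hence the class is invariant under all of $S_{n-2}$ acting on the prefix, which is relation (\ref{f5}). This single move collapses your problematic shuffle sum into $(n-2)$ copies of one word. Second, instead of solving a triangular system, the paper adds the two congruences $[\overline{x_{i_1}\cdots x_{i_{n-2}}x_{i_{n-1}}},x_{i_n}]\equiv 0$ and $[\overline{x_{i_1}\cdots x_{i_{n-2}}x_{i_n}},x_{i_{n-1}}]\equiv 0$ to obtain the sign rule (\ref{f6}), and then chains (\ref{f5}) and (\ref{f6}) to conclude that each skew-rcom element is congruent to its own negative, hence lies in $ST(X)$ in characteristic $0$. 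Note that the Lie elements produced by the substitution trick are nested commutators, not of the form $[\overline{w'},x_j]$: the paper draws on all of $\sum_k[ST(X)_k,ST(X)_{n-k}]$, whereas you restrict yourself to $[ST(X)_{n-1},X]$. If you wish to rescue your route, you must prove that restricted spanning statement (it can be checked by direct rank computation in low degrees, but no general argument is given), and that is exactly the step you have left open.
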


\begin{proof} Write $a \equiv b$ if $a-b\in ST(X).$ If $p(f)=-f,$ then $f$ can be written as a linear combination of skew-right-commutative elements. In order to prove that $f\in ST(X)$, it is sufficient to show that \begin{equation}\label{f4}\overline{\strut x_{i_1}\cdots x_{i_n}}\equiv 0.\end{equation} We prove it by induction on $n.$ 

If $n=2,$ then it is trivial, and if $n=3,$ then $$\overline{\strut x_{i_1}x_{i_2}x_{i_3}}=\frac{1}{2}[[x_{i_1},x_{i_2}],x_{i_3}]-\frac{1}{2}[[x_{i_1},x_{i_3}],x_{i_2}].$$ Assume that $(\ref{f4})$ is true for elements whose degree are less than $n.$ So $$\overline{\strut zx_{i_{k+1}}\cdots x_{i_n}}\equiv 0$$ for any 
Lie element $z$  whose degree is no more than $k.$
Set   $z:= \overline{\strut x_{i_1}\cdots x_{i_k}}$ and have
$$\overline{\strut\overline{\strut x_{i_1}\cdots x_{i_k}}x_{i_{k+1}}\cdots x_{i_n}}\equiv 0$$ for $1<k<n-1.$ Hence
$$\overline{\strut x_{i_1}\cdots x_{i_{k-1}}x_{i_k}x_{i_{k+1}}\cdots x_{i_n}}\equiv\overline{\strut x_{i_1}\cdots x_{i_k}x_{i_{k-1}}x_{i_{k+1}}\cdots x_{i_n}}.$$

Since the symmetric group $S_{n-2}$ is generated by transpositions $(12),(23),\ldots,(n-3\, n-2)$,
for any $\sigma\in {S_{n-2}}$ we have 
\begin{equation}\label{f5}\overline{\strut x_{i_1}\cdots x_{i_n}}\equiv\overline{x_{\sigma(i_1)}\cdots x_{\sigma(i_{n-2})}x_{i_{n-1}}x_{i_n}}.\end{equation}

By (\ref{f5}) and Lemma (\ref{SkewRcomprudctgenerator}) we have
$$[\overline{\strut x_{i_{1}}\cdots x_{i_{n-2}}x_{i_{n-1}}},x_{i_{n}}]\equiv\overline{\strut x_{i_{1}}\cdots x_{i_{n-3}}x_{i_{n-2}}x_{i_{n-1}} x_{i_{n}}}-\overline{\strut x_{i_{1}}\cdots x_{i_{n-3}}x_{i_{n-1}}x_{i_{n-2}} x_{i_{n}}}$$
$$-(n-2) \overline{\strut x_{i_{1}}\cdots x_{i_{n-3}}x_{i_{n}}x_{i_{n-2}}x_{i_{n-1}}}\equiv0
$$
and 
$$[\overline{\strut x_{i_{1}}\cdots x_{i_{n-2}}x_{i_{n}}},x_{i_{n-1}}]\equiv$$
$$\overline{\strut x_{i_{1}}\cdots x_{i_{n-3}}x_{i_{n-2}}x_{i_{n}} x_{i_{n-1}}}-\overline{\strut x_{i_{1}}\cdots x_{i_{n-3}}x_{i_{n}}x_{i_{n-2}} x_{i_{n-1}}}-(n-2) \overline{\strut x_{i_{1}}\cdots x_{i_{n-3}}x_{i_{n-1}}x_{i_{n-2}}x_{i_{n}}}=$$
$$-\overline{\strut x_{i_{1}}\cdots x_{i_{n-2}}x_{i_{n-1}} x_{i_{n}}}-\overline{\strut x_{i_{1}}\cdots  x_{i_{n-3}}x_{i_{n}}x_{i_{n-2}} x_{i_{n-1}}}-(n-2) \overline{\strut x_{i_{1}}\cdots x_{i_{n-3}} x_{i_{n-1}}x_{i_{n-2}}x_{i_{n}}}\equiv0.
$$
Consider sum of last two expressions and have  
$$[\overline{\strut x_{i_{1}}\cdots x_{i_{n-3}}x_{i_{n-2}}x_{i_{n-1}}},x_{i_{n}}]+
[\overline{\strut x_{i_{1}}\cdots x_{i_{n-3}}x_{i_{n-2}}x_{i_{n}}},x_{i_{n-1}}]=$$
$$-(n-1) \overline{\strut x_{i_{1}}\cdots x_{i_{n-3}}x_{i_{n-1}}x_{i_{n-2}}x_{i_{n}}}-(n-1) \overline{\strut x_{i_{1}}\cdots x_{i_{n-3}}x_{i_{n}}x_{i_{n-2}}x_{i_{n-1}}}\equiv0.$$
Thus 
$$ \overline{\strut x_{i_{1}}\cdots x_{i_{n-3}}x_{i_{n-1}}x_{i_{n-2}}x_{i_{n}}}\equiv- \overline{\strut x_{i_{1}}\cdots x_{i_{n-3}}x_{i_{n}}x_{i_{n-2}}x_{i_{n-1}}}.$$
In other words, 
\begin{equation}\label{f6}\overline{\strut x_{i_{1}}\cdots x_{i_{n-3}}x_{i_{n-2}}x_{i_{n-1}}x_{i_{n}}}\equiv- \overline{\strut x_{i_{1}}\cdots x_{i_{n-3}}x_{i_{n}}x_{i_{n-1}}x_{i_{n-2}}}.\end{equation}

Set $u=x_{i_1}\cdots x_{i_{n-4}}.$ By  (\ref{f5}) and (\ref{f6}) we have  
 $$\overline{\strut ux_{i_{n-3}}x_{i_{n-2}}x_{i_{n-1}}x_{i_{n}}}\equiv- \overline{\strut ux_{i_{n-3}}x_{i_{n}}x_{i_{n-1}}x_{i_{n-2}}}\equiv  \overline{\strut ux_{i_{n}}x_{i_{n-3}}x_{i_{n-2}}x_{i_{n-1}}}\equiv $$$$\overline{\strut u x_{i_{n-1}}x_{i_{n}}x_{i_{n-3}}x_{i_{n-2}}}\equiv  \overline{\strut ux_{i_{n}}x_{i_{n-1}}x_{i_{n-3}}x_{i_{n-2}}}\equiv \overline{\strut u x_{i_{n-2}}x_{i_{n}}x_{i_{n-1}}x_{i_{n-3}}}\equiv  $$
$$\overline{\strut ux_{i_{n-3}}x_{i_{n-2}}x_{i_{n}}x_{i_{n-1}}}\equiv - \overline{\strut u x_{i_{n-3}}x_{i_{n-2}}x_{i_{n-1}}x_{i_{n}}}.$$
Hence $$\overline{\strut ux_{i_{n-3}}x_{i_{n-2}}x_{i_{n-1}}x_{i_{n}}}\equiv0$$ and this completes the proof.
\end{proof} 

\subsection{ Proof of Theorem \ref{Lie Criterion}. }

It follows from Lemmas \ref{Lie is skew-rcom} and \ref{SkewRcom is lie}.$\square$

\subsection{ Proof of Theorem \ref{Skew element basis}. }
Since a skew-rcom element defined as $$\overline{\strut x_{i_1}\cdots x_{i_n}}= x_{i_1}\cdots x_{i_n}-x_{i_1}\cdots x_{i_{n-2}}x_{i_n}x_{i_{n-1}}$$  a difference of two base elements any linear combination of skew-rcom elements is trivial, hence they are linear independent in $Zin(X).$ 
By Lemma \ref{Lie is skew-rcom} any element of $ST(X)$ is  a linear combination of skew-rcom elements. 
So we have proved that the set of skew-rcom elements, generated by set $X,$ forms a base of $ST(X).$ 

Let us count the number of skew-rcom elements of degree $n$ generated by $x_1,\ldots,x_q$ in which $x_1,\ldots,x_q$ occur $m_1,\ldots,m_q$ times, respectively.  Consider skew-rcom elements whose last two elements are $x_i, x_j$ for $i<j.$  Then the number of such type of skew-rcom elements of degree $n$ equals
$$\frac{(n-2)!}{m_1! \cdots (m_i-1)!\cdots (m_j-1)!\cdots m_q!}=\frac{(n-2)!}{m_1!\cdots m_{q}!}m_im_j,$$ where $m_1+\cdots+m_q=n.$ 
Hence $$dim\,ST(X)_{m_1,\ldots,m_q}=\sum_{i<j}\frac{(n-2)!}{m_1!\cdots m_q!}m_im_j.$$ If $m_i=1$ for all $i,$ then $n=q,$ and, $$dim\,ST(X)_{1,\ldots,1}=\sum_{1\le i< j\le q} 
(q-2)!=q!/2.$$  $\square$

\subsection{ Proof of Corollary \ref{On last two elements in skew rcom}.} We present a proof of our Corollary for $abc-acb.$ The case $bc-cb$ can be established in a similar way. 

Let $a\in Zin(X)$ and suppose $b,c\in ST(X)$. Then by Theorem \ref{Skew element basis} 
$$b=\sum_\alpha\lambda_\alpha\overline{x_\alpha}, \quad c=\sum_\beta\mu_\beta\overline{y_\beta}.$$ 
We have 
$$abc-acb=a\sum_\alpha\lambda_\alpha\overline{x_\alpha}\sum_\beta\mu_\beta\overline{y_\beta}-a\sum_\beta\mu_\beta\overline{y_\beta}\sum_\alpha\lambda_\alpha\overline{x_\alpha}=$$
$$\sum_{\alpha,\beta}\lambda_{\alpha}\mu_\beta(a\overline{x_\alpha}\,\overline{y_\beta}-a\overline{y_\beta}\,\overline{x_\alpha}).$$ Let $x_{\alpha}=x_{i_1}\cdots x_{i_m}$ and $y_\beta=y_{j_1}\cdots y_{j_n}.$ The cases, when at least one of $m$ and $n$ is equal to two, can be easily proved. Suppose $m,n\geq3.$ Set $u=x_{i_1}\cdots x_{i_{m-2}}$ and $v=y_{j_1}\cdots y_{j_{n-2}}.$ $$a\overline{\strut x_{\alpha}}\,\overline{\strut y_{\beta}}=$$$$a(ux_{i_{m-1}}x_{i_m})(vy_{j_{n-1}}y_{i_n})-a(ux_{i_m}x_{i_{m-1}})(vy_{j_{n-1}}y_{i_n})-$$$$a(ux_{i_{m-1}}x_{i_m})(vy_{i_n}y_{j_{n-1}})+a(ux_{i_m}x_{i_{m-1}})(vy_{i_n}y_{j_{n-1}})=$$
(by part {\bf b} of Proposition \ref{shuffle product})
$$((a\sh ux_{i_{m-1}})x_{i_m}\sh vy_{j_{n-1}})y_{j_n}-((a\sh ux_{i_m})x_{i_{m-1}}\sh vy_{j_{n-1}})y_{j_n}-$$$$((a\sh ux_{i_{m-1}})x_{i_m}\sh vy_{j_n})y_{j_{n-1}}+((a\sh ux_{i_m})x_{i_{m-1}}\sh vy_{j_n})y_{j_{n-1}}$$
(by part {\bf c} of Proposition \ref{shuffle product})
$$((a\sh ux_{i_{m-1}})\sh vy_{j_{n-1}})x_{i_m}y_{j_n}+((a\sh ux_{i_{m-1}})x_{i_m}\sh v)y_{j_{n-1}}y_{j_n}-$$
$$((a\sh ux_{i_m})\sh vy_{j_{n-1}})x_{i_{m-1}}y_{i_n}-((a\sh ux_{i_m})x_{i_{m-1}}\sh v)y_{j_{n-1}}y_{j_n}-$$
$$((a\sh ux_{i_{m-1}})\sh vy_{i_n})x_{i_m}y_{j_{n-1}}-((a\sh ux_{i_{m-1}})x_{i_m}\sh v)y_{i_n}y_{j_{n-1}}+$$
$$((a\sh ux_{i_m})\sh vy_{i_n})x_{i_{m-1}}y_{j_{n-1}}+((a\sh ux_{i_m})x_{i_{m-1}}\sh v)y_{i_n}y_{j_{n-1}}=$$
$$\overline{\strut((a\sh ux_{i_{m-1}})x_{i_m}\sh v)y_{j_{n-1}}y_{j_n}}-\overline{\strut((a\sh ux_{i_m})x_{i_{m-1}}\sh v)y_{j_{n-1}}y_{j_n}}+$$
$$((a\sh ux_{i_{m-1}})\sh vy_{j_{n-1}})x_{i_m}y_{j_n}-((a\sh ux_{i_m})\sh vy_{j_{n-1}})x_{i_{m-1}}y_{i_n}-$$
$$((a\sh ux_{i_{m-1}})\sh vy_{j_n})x_{i_m}y_{j_{n-1}}+((a\sh ux_{i_m})\sh vy_{j_n})x_{i_{m-1}}y_{j_{n-1}}.$$
By similar way one can have $$a\overline{\strut y_{\beta}}\,\overline{\strut x_{\alpha}}=$$
$$\overline{\strut((a\sh vy_{j_{n-1}})y_{j_n}\sh u)x_{i_{m-1}}x_{i_m}}-\overline{\strut ((a\sh vy_{j_n})y_{j_{n-1}}\sh u)x_{i_{m-1}}x_{i_m}}+$$$$((a\sh vy_{j_{n-1}})\sh ux_{i_{m-1}})y_{j_n}x_{i_m}-((a\sh vy_{j_n})\sh ux_{i_{m-1}})y_{j_{n-1}}x_{i_m}-$$$$((a\sh vy_{j_{n-1}})\sh ux_{i_m})y_{j_n}x_{i_{m-1}}+((a\sh vy_{j_n})\sh ux_{i_m})y_{j_{n-1}}x_{i_{m-1}}.$$
So $$a\overline{\strut x_{\alpha}}\,\overline{\strut y_{\beta}}-a\overline{\strut y_{\beta}}\,\overline{\strut {x_\alpha}}=$$(by part {\bf a} of Proposition \ref{shuffle product})
$$\overline{\strut((a\sh ux_{i_{m-1}})x_{i_m}\sh v)y_{j_{n-1}}y_{j_n}}-\overline{\strut((a\sh ux_{i_m})x_{i_{m-1}}\sh v)y_{j_{n-1}}y_{j_n}}-$$
$$\overline{\strut((a\sh vy_{j_{n-1}})y_{j_n}\sh u)x_{i_{m-1}}x_{i_m}}+\overline{\strut ((a\sh vy_{j_n})y_{j_{n-1}}\sh u)x_{i_{m-1}}x_{i_m}}+$$
$$\overline{\strut((a\sh ux_{i_{m-1}})\sh vy_{j_{n-1}})x_{i_m}y_{j_n}}-\overline{\strut((a\sh ux_{i_m})\sh vy_{j_{n-1}})x_{i_{m-1}}y_{i_n}}-$$
$$\overline{\strut((a\sh ux_{i_{m-1}})\sh vy_{j_n})x_{i_m}y_{j_{n-1}}}+\overline{\strut((a\sh ux_{i_m})\sh vy_{j_n})x_{i_{m-1}}y_{j_{n-1}}}.$$
Hence by Theorem \ref{Lie Criterion} $abc-acb\in ST(X).$ $\square$

\section{\label{nn}\ Proof of Theorem \ref{a Jordan criterion}}
In this section we prove Jordan criterion for $Zin(X).$ 
\begin{lemma}\label{a Jordan element in a free Zinbiel algebra}
$\{\{\cdots\{x_{i_1},x_{i_2}\}\cdots\},x_{i_n}\}=\sum_{\sigma\in S_n}x_{\sigma(i_1)}x_{\sigma(i_2)}\cdots x_{\sigma(i_{n})}.$
\end{lemma}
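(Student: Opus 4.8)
The plan is to prove the identity by induction on $n$, reducing the iterated Jordan product to an iterated shuffle product of single generators. The base cases are immediate: for $n=2$ one has $\{x_{i_1},x_{i_2}\}=x_{i_1}\circ x_{i_2}+x_{i_2}\circ x_{i_1}=x_{i_1}x_{i_2}+x_{i_2}x_{i_1}$, which is exactly the sum over $S_2$. The engine of the induction is the observation that appending one more generator via the Jordan product is the same as shuffling that generator in.

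More precisely, first I would establish, for any base element $u=x_{i_1}\cdots x_{i_p}$ and any generator $x_j$, the single-step identity
$$\{u,x_j\}=u\sh x_j.$$
To see this, write $\{u,x_j\}=u\circ x_j+x_j\circ u$. Since $u$ is left-bracketed, $u\circ x_j=x_{i_1}\cdots x_{i_p}x_j$ places $x_j$ in the last slot; and by Proposition \ref{Loday's prop} applied with the left factor the single letter $x_j$,
$$x_j\circ(x_{i_1}\cdots x_{i_p})=\sum_{\sigma\in Sh(j;i_1\ldots i_{p-1})}x_{\sigma(1)}\cdots x_{\sigma(p)}x_{i_p},$$
which runs over all insertions of $x_j$ into the first $p$ slots while keeping $x_{i_p}$ fixed at the end. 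Summing the two contributions inserts $x_j$ into every one of the $p+1$ positions exactly once, which is by definition $u\sh x_j$. By bilinearity of $\circ$ the identity $\{w,x_j\}=w\sh x_j$ then holds for an arbitrary element $w\in Zin(X)$.

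With this in hand the induction closes quickly. Assuming $\{\{\cdots\{x_{i_1},x_{i_2}\}\cdots\},x_{i_{n-1}}\}=x_{i_1}\sh\cdots\sh x_{i_{n-1}}$, the single-step identity gives
$$\{\{\cdots\{x_{i_1},x_{i_2}\}\cdots\},x_{i_n}\}=\bigl(x_{i_1}\sh\cdots\sh x_{i_{n-1}}\bigr)\sh x_{i_n}=x_{i_1}\sh\cdots\sh x_{i_n},$$
using the associativity of $\sh$ from part {\bf a} of Proposition \ref{shuffle product}. It then remains to identify the iterated shuffle of single generators with the permutation sum, namely $x_{i_1}\sh\cdots\sh x_{i_n}=\sum_{\sigma\in S_n}x_{\sigma(i_1)}\cdots x_{\sigma(i_n)}$; this follows by a short induction, since shuffling one more generator into each arrangement of $n-1$ letters produces precisely the arrangements of $n$ letters, each exactly once.

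I expect the only genuinely delicate point to be the bookkeeping in the single-step identity, specifically checking that the $p$ shuffle terms of $x_j\circ u$ together with the single term $u\circ x_j$ account for all $p+1$ insertion positions without overlap. The crucial feature that makes this clean is that the left-bracketed Zinbiel product preserves the last letter under $x_j\circ u$ (Proposition \ref{Loday's prop}); once this is used, everything reduces to elementary counting of insertions and permutations, so no essential obstacle remains.
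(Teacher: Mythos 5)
Your proof is correct and follows essentially the same route as the paper's: induction on $n$, with Proposition \ref{Loday's prop} supplying the key fact that adjoining a generator via the anticommutator inserts it into every position exactly once (the term $u\circ x_j$ giving the last slot, the term $x_j\circ u$ giving all the others). The only difference is packaging: you phrase the step as $\{w,x_j\}=w\sh x_j$ and carry shuffles through the induction, converting to the permutation sum at the end, whereas the paper carries the sum $\sum_{\sigma\in S_{n-1}}x_{\sigma(i_1)}\cdots x_{\sigma(i_{n-1})}$ directly and merges the two contributions in one step.
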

\begin{proof}
We prove it by induction on $n.$ If $n=2,$ then $\{x_{i_1},x_{i_2}\}=x_{i_1}x_{i_2}+x_{i_2}x_{i_1}.$ Suppose that it is true for $n-1.$ Then 
$$\{\{\cdots\{x_{i_1},x_{i_2}\}\cdots\},x_{i_n}\}=$$
$$\{\{\cdots\{x_{i_1},x_{i_2}\}\cdots\},x_{i_{n-1}}\}x_{i_n}+x_{i_n}\{\{\cdots\{x_{i_1},x_{i_2}\}\cdots\},x_{i_{n-1}}\}=$$
(by induction hypothesis)
$$\sum_{\sigma\in S_{n-1}}x_{\sigma(i_1)}x_{\sigma(i_2)}\cdots x_{\sigma(i_{n-1})}x_{i_n}+x_{i_n}\sum_{\sigma\in S_{n-1}}x_{\sigma(i_1)}x_{\sigma(i_2)}\cdots x_{\sigma(i_{n-1})}=$$
(by Proposition \ref{Loday's prop})
$$\sum_{\sigma\in S_n}x_{\sigma(i_1)}x_{\sigma(i_2)}\cdots x_{\sigma(i_{n})}.$$
\end{proof}
\subsection{ Proof of Theorem \ref{a Jordan criterion}. }
Recall that $A^{(+)}$ is associative and commutative algebra if $A$ is Zinbiel. Any Jordan element in $Zin(X)$ can be written as linear combination of left-normed Jordan monomials in $X$ by anti-commutators. Then the proof follows from  Lemma \ref{a Jordan element in a free Zinbiel algebra} and definition of the map $D.$

Let $\varphi:K[X]\rightarrow J(X)$ be a canonical homomorphism from polynomial algebra generated by $X$ to $J(X)$ defined as $x_{i_1}x_{i_2}\cdots x_{i_n}\mapsto\{\{\cdots\{x_{i_1},x_{i_2}\}\cdots\}x_{i_n}\}.$ Then it is clear that $Ker\,\varphi$ is zero and therefore $K[X]$ and $J(X)$ are isomorphic.$\square$

Denote by  $J(X)_{m_1,\ldots,m_q}$ the homogenous part of $J(X)$ generated by $m_i$ generators $x_i$ where $i=1,\ldots,q.$ 
\begin{corollary}\label{dimension of space of Jordan elements}
The dimension of the homogenous part $J(X)_{m_1,\ldots,m_q}$ of $J(X)$ is equal to $dim\,J(X)_{m_1,\ldots,m_q}=1.$ 
\end{corollary}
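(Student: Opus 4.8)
The plan is to deduce the statement directly from the isomorphism $J(X)\cong K[X]$ established in Theorem \ref{a Jordan criterion}, together with the observation that this isomorphism respects the grading by multidegree. First I would note that the canonical homomorphism $\varphi:K[X]\rightarrow J(X)$ sends a commutative monomial to a left-normed Jordan monomial of the same length, so by Lemma \ref{a Jordan element in a free Zinbiel algebra} the image of a monomial in which $x_i$ occurs $m_i$ times is a sum of Zinbiel words each using $x_i$ exactly $m_i$ times. Hence $\varphi$ carries the homogeneous component $K[X]_{m_1,\ldots,m_q}$ isomorphically onto $J(X)_{m_1,\ldots,m_q}$, and it suffices to compute the dimension on the polynomial side.

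On the polynomial side the computation is immediate: the multidegree component $K[X]_{m_1,\ldots,m_q}$ is spanned by the single monomial $x_1^{m_1}\cdots x_q^{m_q}$, so it is one-dimensional. Transporting along $\varphi$ gives $dim\,J(X)_{m_1,\ldots,m_q}=1$, as claimed.

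The only point that requires care, and it is a minor one, is to confirm that the two natural definitions of $J(X)_{m_1,\ldots,m_q}$ agree and that $\varphi$ is indeed graded. This follows because commutativity of $K[X]$ makes the monomial $x_1^{m_1}\cdots x_q^{m_q}$ independent of the order of its factors, while commutativity and associativity of the anticommutator product on $Zin(X)^{(+)}$ (noted at the start of the proof of Theorem \ref{a Jordan criterion}) make the corresponding Jordan monomial equally well defined; Lemma \ref{a Jordan element in a free Zinbiel algebra} then pins down its expansion as a multidegree-homogeneous element of $Zin(X)$. I expect no genuine obstacle beyond this bookkeeping, since the substantive content, namely injectivity of $\varphi$, i.e.\ the absence of relations, has already been settled in Theorem \ref{a Jordan criterion}.
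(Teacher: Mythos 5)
Your proof is correct and takes essentially the same route as the paper: the paper's own proof simply declares the corollary an immediate consequence of Theorem \ref{a Jordan criterion}, and your argument just spells out the implicit bookkeeping (that the isomorphism $\varphi:K[X]\rightarrow J(X)$ preserves multidegree, via Lemma \ref{a Jordan element in a free Zinbiel algebra}, and that each multidegree component of $K[X]$ is spanned by a single monomial). Nothing in your writeup deviates from or adds to the paper's intended argument in substance.
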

\begin{proof}
It is an immediate consequence of Theorem \ref{a Jordan criterion}.
\end{proof}

\section{\label{nn}\ Speciality of  $T(\{x,y\})$}

In this section we prove that the free Tortkara algebra on two generators $T(\{x,y\})$ is special. As a corollary, we obtain construction of a base of $T(\{x,y\})$ in terms of left-normed elements.
\begin{lemma}\label{left-normed elements on 2 generators}
 Let $T_n$ be the n-th homogenous part of $T(\{x,y\}).$  Then $T_{n+1}=T_nT_1$ for any $n.$
\end{lemma}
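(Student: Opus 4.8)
The plan is to establish the nontrivial inclusion $T_{n+1}\subseteq T_nT_1$ by induction on the total degree $N=n+1$, the reverse inclusion $T_nT_1\subseteq T_{n+1}$ being obvious. It suffices to prove $T_pT_q\subseteq T_{N-1}T_1$ for every bidegree $p+q=N$ with $q\ge 2$. The induction hypothesis supplies, in each degree below $N$, a spanning set of left-normed products (rightmost factor a generator); in particular $T_p=T_{p-1}T_1$ and $T_q=T_{q-1}T_1$ whenever $p,q<N$. The case $p=1$ is immediate from anticommutativity, since $P\cdot Q=-Q\cdot P\in T_qT_1=T_{N-1}T_1$.

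For $p,q\ge 2$ I would run a secondary induction on the right-hand degree $q$. Using the outer hypothesis I write $P=\sum_k a_kb_k$ and $Q=\sum_j c_jd_j$ with all $b_k,d_j$ generators, so it is enough to reduce a single product $(ab)(cd)$ in which $b$ and $d$ are generators. The essential step is to flip the second factor by anticommutativity, $cd=-dc$, and only then apply the linearized Tortkara identity (\ref{linear-Tortkara identity}) to $(ab)(dc)$, obtaining $(ab)(dc)=-(ac)(db)+J(a,b,d)c+J(a,c,d)b$. Here $J(a,c,d)b$ already lies in $T_{N-1}T_1$ (as $b$ is a generator); the term $J(a,b,d)c$ lies in $T_{p+1}T_{q-1}$ and $(ac)(db)$ lies in $T_{N-2}T_2$, both of right-hand degree strictly less than $q$ when $q\ge 3$, so the secondary induction disposes of them.

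The main obstacle is the base case $q=2$, and it is precisely here that having only two generators is indispensable. A naive iteration of (\ref{linear-Tortkara identity}) at the top level is circular, because the identity controls only the symmetric combination $(ab)(cd)+(ad)(cb)$ and leaves the antisymmetric part $(ab)(cd)-(ad)(cb)$ untouched; the flip $cd=-dc$ is what genuinely lowers the right-hand degree. For $q=2$ one has $T_2=\langle xy\rangle$ with $xx=yy=0$, so, decomposing $P=\sum P'b$ via the hypothesis, I must reduce $(P'b)(xy)$ with $b\in\{x,y\}$. Applying (\ref{linear-Tortkara identity}) and then anticommutativity gives $(P'b)(xy)\equiv (P'y)(bx)\pmod{T_{N-1}T_1}$. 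If $b=x$ the right factor becomes $xx=0$; if $b=y$ one gets $(P'y)(xy)\equiv -(P'y)(xy)$, whence $2(P'y)(xy)\in T_{N-1}T_1$ and we conclude in characteristic $0$. With a third generator $z$ the same computation would instead leave an irreducible term $(P'y)(zx)$, which is exactly why the statement is special to two generators.

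Assembling the cases yields $T_N=\sum_{p+q=N}T_pT_q\subseteq T_{N-1}T_1$, and hence $T_{n+1}=T_nT_1$. I expect the bookkeeping of the double induction (outer on $N$, inner on $q$, with the $q=2$ base case closed by the characteristic-zero argument above) to be the only delicate point; the reductions themselves are routine applications of (\ref{linear-Tortkara identity}) and anticommutativity.
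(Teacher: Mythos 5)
Your proposal is correct and takes essentially the same route as the paper: a double induction (outer on total degree, inner on the degree of the right-hand factor), where the inductive step flips the right factor by anticommutativity and then applies the linearized identity (\ref{linear-Tortkara identity}), and the two-generator hypothesis enters exactly in the right-degree-$2$ base case. The only cosmetic difference is how that base case is closed: the paper applies the quadratic Tortkara identity (\ref{Tortkara identity}) directly, via $(A_{k-1}x)(yx)=J(A_{k-1},x,y)x$, whereas you apply (\ref{linear-Tortkara identity}) once more and finish with $xx=0$ or with $2(P'y)(xy)\equiv 0$ --- the same computation up to a factor of $2$, harmless in characteristic $0$.
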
 
\begin{proof}
Clearly, $T_{n+1}\supseteq T_nT_1.$ 

We write $a\equiv b$ if $a-b\in T_nT_1.$  We prove the statement by induction on degree $n.$ We have
$$(ab)(cd)=\frac{1}{2}J(b,c,d)a-\frac{1}{2}J(a,c,d)b-\frac{1}{2}J(a,b,d)c+\frac{1}{2}J(a,b,c)d\equiv0$$
This is the basis of induction for $n.$ Suppose that our statement is true for fewer than $n>4.$ Let $C\in T_n$ and $C=A_{k}B_{l}$ where $A_{k}$ and $B_{l}$ are elements of $T(\{x,y\})$ whose degrees are $k$ and $l,$ respectively, and $k+l=n.$
Now we consider induction on $l.$ By induction on $n$ we may assume that they are left-normed and write
$$C=A_{k}B_{l}=(A_{k-1}a_{k})(B_{l-1}b_{l})$$ where $a_{k},b_{l}\in\{x,y\}.$
Suppose $l=2$ and $b_1=x,$ $b_2=y.$ Assume $a_k=x.$ Then by identity (\ref{Tortkara identity}) and induction on $n$ we have
$$C=(A_{k-1}x)(yx)=J(A_{k-1},x,y)x\equiv 0.$$
Suppose that our statement is true for fewer than $l>2.$
We have 
$$(A_{k-1}a_{k})(B_{l-1}b_{l})=$$
(by anticommutativity identity)
$$-(A_{k-1}a_{k})(b_{l}B_{l-1})=$$
(by identity (\ref{linear-Tortkara identity}))
$$(A_{k-1}B_{l-1})(b_{l}a_k)-J(A_{k-1},a_k,b_l)B_{l-1}-J(A_{k-1},B_{l-1},b_l)a_k.$$
We note that by base of induction on $l$ $(A_{k-1}B_{l-1})(b_{l}a_k)\equiv 0,$ and $J(A_{k-1},a_k,b_l)B_{l-1}\equiv 0.$ By induction on $n$ we have $J(A_{k-1},B_{l-1},b_l)a_k\equiv 0.$
Hence $$(A_{k-1}a_{k})(B_{l-1}b_{l})\equiv 0.$$
\end{proof}
\subsection{ Proof of Theorem \ref{Speciality of $T({x,y})$}.} It is sufficient to show that algebras $T(\{x,y\})$ and $ST(\{x,y\})$ are isomorphic. Let $\varphi$ be a natural homomorphism from $T(\{x,y\})$ to $ST(\{x,y\}).$ By Lemma \ref{left-normed elements on 2 generators} the vector space $T(\{x,y\})$ is spanned by the set of  left-normed elements. We note that number of left-normed elements in two generators is equal to the number of skew-rcom elements in two generators. Suppose that the kernel of $\varphi$ is not zero. Then we have a linear combination of skew-rcom elements which is zero in $ST(\{x,y\}).$ It contradicts to the first part of Theorem \ref{Skew element basis}. Therefore, $Ker\,\varphi=(0).$ $\square$

\begin{corollary}
Set of left-normed elements forms a base of $T({x,y}).$
\end{corollary}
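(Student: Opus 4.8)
The plan is to combine the three pieces already in place: the spanning statement from Lemma \ref{left-normed elements on 2 generators}, the isomorphism from the Proof of Theorem \ref{Speciality of $T({x,y})$}, and the basis of skew-rcom elements from Theorem \ref{Skew element basis}. First I would observe that Lemma \ref{left-normed elements on 2 generators} gives $T_{n+1}=T_nT_1$ for every $n$, and iterating this identity shows that each homogeneous component $T_n$ is spanned by left-normed monomials $(\cdots((a_1a_2)a_3)\cdots)a_n$ with $a_i\in\{x,y\}$. Hence the set of left-normed elements spans $T(\{x,y\})$ as a vector space; it only remains to establish linear independence.

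The key step is that the natural homomorphism $\varphi\colon T(\{x,y\})\to ST(\{x,y\})$, shown in the Proof of Theorem \ref{Speciality of $T({x,y})$} to have trivial kernel, is an isomorphism. Under $\varphi$ each left-normed Tortkara monomial is sent to the corresponding left-normed Lie commutator in $Zin(\{x,y\})^{(-)}$; since $[a,b]=\overline{ab}$ on generators, these images lie in $ST(\{x,y\})$ and, by Theorem \ref{Lie Criterion} together with Theorem \ref{Skew element basis}, are expressed through the skew-rcom elements $\overline{x_\alpha}$, $\alpha\in\Gamma$, which form a base of $ST(\{x,y\})$. Because $\varphi$ is a linear isomorphism carrying the spanning family of left-normed elements onto this base, that family cannot satisfy any nontrivial linear relation in $T(\{x,y\})$; otherwise its $\varphi$-image would give a nontrivial relation among basis elements of $ST(\{x,y\})$, a contradiction.

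Therefore the left-normed elements are simultaneously a spanning set and linearly independent, so they form a base of $T(\{x,y\})$, as claimed. I expect the only point requiring care to be the bookkeeping already noted in the Proof of Theorem \ref{Speciality of $T({x,y})$}: namely that anticommutativity forces the first two factors of a nonzero left-normed monomial to be $x$ and $y$, so that in each degree $n$ there are exactly $2^{n-2}$ essentially distinct left-normed elements, matching the number of skew-rcom elements $\overline{x_\alpha}$ whose terminal pair is $x_1x_2$. Verifying this numerical coincidence confirms that $\varphi$ restricts to a bijection between the two families and completes the argument.
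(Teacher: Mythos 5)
Your three ingredients --- spanning from Lemma \ref{left-normed elements on 2 generators}, injectivity of $\varphi$ from the proof of Theorem \ref{Speciality of $T({x,y})$}, and the skew-rcom base of Theorem \ref{Skew element basis} together with the count $2^{n-2}$ in each degree --- are exactly the ones the paper's argument rests on, but the step you use to extract linear independence is false as stated. The isomorphism $\varphi$ does \emph{not} carry the family of left-normed monomials onto the skew-rcom base, and the numerical coincidence does not ``confirm a bijection between the two families'': no such element-by-element correspondence exists. Indeed $[[x,y],x]\mapsto -2\,\overline{\strut xxy}$, but already in degree four one computes with Lemma \ref{SkewRcomprudctgenerator} that $[[[x,y],x],y]\mapsto 4\,\overline{\strut xyxy}+2\,\overline{\strut yxxy}$, a combination of two distinct base elements. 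Consequently your central inference --- that a nontrivial relation among the left-normed elements would give ``a nontrivial relation among basis elements of $ST(\{x,y\})$'' --- does not follow: applying $\varphi$ to such a relation yields a vanishing linear combination of the \emph{images}, which are themselves linear combinations of several $\overline{x_\alpha}$, and a priori such a combination could collapse to zero. As written, the argument presupposes exactly what is to be proved, namely that the images of the left-normed monomials are linearly independent.

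The gap closes with the counting you already set up, used as a dimension argument rather than as a matching of families. In each degree $n$ the left-normed elements span $T_n$ (Lemma \ref{left-normed elements on 2 generators}), and by anticommutativity every left-normed monomial of degree $n$ is, up to sign, one of the $2^{n-2}$ monomials $((xy)a_3\cdots)a_n$ with $a_i\in\{x,y\}$; on the other hand the degree-$n$ homogeneous part of $T(\{x,y\})$ has dimension equal to that of the degree-$n$ part of $ST(\{x,y\})$, which is $2^{n-2}$, since $\varphi$ is an isomorphism and the skew-rcom elements ending in $xy$ form a base by Theorem \ref{Skew element basis}. A spanning family of a finite-dimensional space whose cardinality does not exceed the dimension must be linearly independent, hence a base. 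This dimension count is the paper's (implicit) proof of the corollary; no bijection between the two families is needed, nor does one exist.
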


\section{\label{nn}\ Speciality of homomorphic images of $ST(\{x,y\})$}
Let $\alpha$ be an ideal of $ST(X).$ By Cohn's criterion (Theorem 2.2 of \cite{Cohn}) $ST(X)/\alpha$ is special if and only if $\{\alpha\}\cap ST(X)\subseteq \alpha$ where $\{\alpha\}$ is the ideal of $Zin(X)$ generated by the set $\alpha.$ 

\subsection{ Proof of Theorem \ref{speciality criterion on two generators}.} Assume that $g_i$ $(i\in I)$ are generators of the ideal $\alpha.$  It is clear that if $\overline{xy}\in \alpha$ then $ST(\{x,y\})/\alpha$ is special.

Therefore, by Theorem \ref{Lie Criterion} we can  assume that each element $g_i$ has a form $\overline{f_ixy}$ for some $f_i\in Zin(\{x,y\}).$

Let $w$ be a non-zero element of $\{\alpha\}\cap ST(\{x,y\})$. Then $p(w)=-w$ and $w$ is a linear combination of left-normed monomials in $x, y, g_i (i\in I)$  such that each monomial is linear by at least one generator of $\alpha$. Let $a_1\cdots a_n$ be a term of $w$ in the linear combination. To prove the statement we consider two cases, depending on what position a generator appear in  $a_1\cdots a_n.$ 

{\it Case 1.} Suppose that generators of $\alpha$ appear only in the first $n-2$ positions in $a_1\cdots a_n.$  Then write all $a_i's$ in terms of elements of $X.$ Since $w\in ST(\{x,y\})$, $w$ must have the term  $p(a_1\cdots a_n)$ with opposite sign. Hence $\overline{a_1\cdots a_n}\in\alpha.$

{\it Case 2.} Suppose that generators of $\alpha$ appear in either $n-1$-th or $n$-th positions in $a_1\cdots a_n,$ (a generator of $\alpha$ may appear in the first $n-2$-positions), namely,
\begin{center}
$a_{n-1}=x$ and $a_n=\overline{\strut f_ixy},$ or $a_{n-1}=\overline{\strut f_ixy}$ and $a_n=x$ 
\end{center}
for some $i.$ If generators of $\alpha$ appear in both $n-1$-th and $n$-th positions of $a_1\cdots a_n,$ then write one of them in terms of $x's$ and $y's.$ We also express $a_1,\ldots,a_{n-2}$ in terms of $x's$ and $y's$, therefore we can assume that $a_1,\ldots, a_{n-2}\in X.$ Let us denote $a_1\cdots a_{n-2}$ by $u$.

Now we show that if $ux\overline{\strut  f_ixy}$ is a term of $w$ then $w$ has the term $u\overline{\strut  f_ixy}x$ with opposite sign. 

We have $$ux\overline{\strut  f_ixy}=R_1+R_2-R_3,$$
where $$R_1=\overline{\strut  f_iuxxy}+\overline{\strut  uf_ixxy}+\overline{\strut  uxf_ixy}, \quad R_2=f_iuxxy+f_ixuxy+uf_ixxy,$$$$ R_3=f_iuyxx+f_iyuxx+uf_iyxx.$$
By Theorem \ref{Lie Criterion}, $R_1$ is Lie, but $R_2,R_3$ are not Lie.
Since $w\in ST(\{x,y\}),$ the term $R_3$ should be cancelled and for each term $s\in\{f_iuxxy,f_ixuxy,uf_ixxy\}$ of $R_2,$ $w$ must have  terms $s$ or $p(s)$ with opposite sign to cancel $s$ or have $\overline{s}.$ Therefore, $w$ must have some terms in which $g_i$ $(i\in I)$ appear in either $n-1$-th or $n$-th positions. These kind of terms are generated by $u,f,x,x,y.$ Then all possibilities of such types are $u\overline{\strut  f_ixy}x,f_ix\overline{\strut  uxy}$ and $f_i\overline{\strut  uxy}x.$ We have
$$u\overline{\strut  f_ixy}x=f_iuxyx+f_ixuyx+uf_ixyx-f_iuyxx-f_iyuxx-uf_iyxx,$$
$$f_ix\overline{\strut  uxy}=\overline{\strut  f_iuxxy}+\overline{\strut  f_ixuxy}+\overline{\strut  uf_ixxy}+f_iuxxy+uf_ixxy+uxf_ixy-$$
$$f_iuyxx-uf_iyxx-uyf_ixx,$$
$$f_i\overline{\strut  uxy}x=f_iuxyx+uf_ixyx+uxf_iyx-f_iuyxx-uf_iyxx-uyf_ixx.$$
We see that the element $f_iyuxx$ is a term of only $u\overline{\strut  f_ixy}x$ and moreover,
$$ux\overline{\strut f_ixy}-u\overline{\strut f_ixy}x=\overline{\strut f_iuxxy}+\overline{\strut uf_ixxy}+\overline{\strut uxf_ixy}+$$$$\overline{\strut f_iuxxy}+\overline{\strut f_ixuxy}+\overline{\strut uf_ixxy}\in ST(\{x,y\}).$$
Therefore, $w$ has the term $u\overline{\strut f_ixy}x$ with opposite sign. Since $\overline{\strut f_ixy}$ is a generator of $\alpha,$ and by Corollary \ref{On last two elements in skew rcom}
$$ux\overline{\strut f_ixy}-u\overline{\strut f_ixy}x\in\alpha.$$
Hence $a_1\cdots a_n-a_1\cdots a_{n-2}a_na_{n-1}\in\alpha.$
If $x\overline{f_ixy}$ is  a nonzero term of $w$, then by similar way one can show that $w$ must have term $\overline{\strut f_ixy}x$. 

So we obtain $w\in\alpha.$ It follows $\{\alpha\}\cap ST(\{x,y\})\subseteq \alpha.$ Hence by Cohn's criterion $ST(\{x,y\})/\alpha$ is special.  

Now we show that a homomorphic image of $ST(\{x,y,z\})$ may be not special.

Let $\alpha$ be an ideal of $ST(\{x,y,z\})$ generated by elements $$g_{1}=\overline{\strut yyz},g_2=\overline{\strut yxz},g_3=\overline{\strut yxy}.$$
Consider an element $$w=\overline{\strut xyyz}-\overline{\strut yyxz}+\overline{\strut zyxy}.$$
Then $$w=xg_1-yg_2+zg_3.$$
It follows $$w\in\{\alpha\}\cap ST(\{x,y,z\}).$$
One can easily check that there are no $\lambda_1,\lambda_2,\lambda_3\in {\bf K}$ so that
$$w=\lambda_1[x,g_1]+\lambda_2[y,g_2]+\lambda_3[z,g_3].$$ 
Then $w\notin\alpha.$ Hence by Cohn's criterion, $ST(\{x,y,z\})/\alpha$  is not special.$\square$

\section{\label{nn}\ Remarks and open questions}
{\bf 1.} Let $A={\bf C}[x]$ be an algebra with multiplication \begin{equation}\label{f0}a\star b=b\int_0^x(\int^x_0a\,dx)dx.\end{equation} $(A,\star)$ is not a Zinbiel algebra.
This algebra was considered in \cite{Dzhumadil'daev}. It was proved that it satisfies the following identities $$a\star(b\star c)-b \star (a\star c)=0,$$ $$([a,b],c,d)+([b,c],a,d)+([c,a],b,d)=0$$
where $(a,b,c)=a\star (b\star c)-(a\star b)\star c.$
Moreover, it was proved that algebra $A$ with respect to commutator $[a,b]_{\star}=a\star b-b\star a$ is a Tortkara algebra. A question on speciality of $(A ,[,]_{\star}) $ was posed.

We show that answer is positive.
Let $B={\bf C}[x]$ be an algebra with multiplication $$a\diamond b=b\int_0^x(\int^x_0a\,dx)dx+\int^x_0 a\,dx\int^x_0b\,dx.$$ Then $(B,\diamond)$ is a Zinbiel algebra. For $\diamond$ multiplication we define commutator $[a,b]_{\diamond}=a\diamond b-b\diamond a.$ 
Note that $[a,b]_{\star}=[a,b]_{\diamond}.$ So $A^{(-)}$ is isomorphic to $B^{(-)}.$ Hence $(A,[,]_{\star})$ is special.  

{\bf 2.} It is shown in \cite{Dzhumadil'daev} that an algebra with identities (\ref{Tortkara example}) is not Zinbiel but under the commutator product is Tortkara.
What about speciality of these algebras?

{\bf 3.} Let $k_m$ be kernel of the natural homomorphism from free Tortkara algebra  to free special Tortkara algebra on $m$ generators. An element of the ideal $k_m$ is called a $s$-identity. We showed that $k_2=(0)$. Are there $s$-identities for  $m>2$?

{\bf 4.} Is it true the analogue of Lemma \ref{left-normed elements on 2 generators} for $m>2$ generators? Whenever it is valid for $m$ generators, it immediately follows speciality of $T(\{x_1,\ldots,x_m\}),$ in particular, $k_m=(0)$.

{\bf 5.} What is the analogue of classical Artin's theorem for Tortkara algebras? In other words, if $A$ is a Tortkara algebra and $alg<a,b>$ is the subalgebra generated by $a,b\in A,$ then what is the characterization of $Var(alg<a,b>)?$

{\bf Acknowledgments.} The authors are grateful to Professor I.P. Shestakov for his discussion and for essential comments.

\end{document}